\documentclass[12pt,reqno]{amsart}
\usepackage[headings]{fullpage}
\usepackage{amssymb,amsmath,amscd,bbm,tikz-cd}
\usepackage[all,cmtip]{xy}
\usepackage{url}
\usepackage[bookmarks=true,%
colorlinks=true,%
linkcolor=blue,
citecolor=blue,%
filecolor=blue,%
menucolor=blue,%
urlcolor=blue,%
breaklinks=true]{hyperref}
\usepackage{slashed}
\usepackage{listings}
\usepackage{verbatim}
\usepackage{mathtools}
\usepackage[normalem]{ulem}   
\usepackage{graphicx}
\usepackage{texdraw}
\usepackage{caption}
\usepackage{multirow}
\usepackage{tabularx}
\usepackage{array}
\usepackage{longtable}
\usepackage{pgfplots}
\pgfplotsset{compat=1.17}
\graphicspath{{figures/}}

\newcommand{\BA}{\mathbb{A}}

\newcommand{\BC}{\mathbb{C}}

\newcommand{\calN}{\mathcal{N}}
\newcommand{\calQ}{\mathcal{Q}}
\newcommand{\calX}{\mathcal{X}}

\newcommand{\calG}{\mathcal{G}}
\newcommand{\calR}{\mathcal{R}}

\newcommand{\sgn}{\mathrm{sgn}}
\newcommand{\SL}{\mathrm{SL}}
\newcommand{\PSL}{\mathrm{PSL}}

\newcommand{\Tr}{\mathrm{Tr}}

\newcommand{\Spec}{\mathrm{Spec}}

\newcommand{\lrbar}[1]{\langle #1 \rangle}

\newcommand{\II}{\mathrm{i}}   

\newtheorem{theorem}{Theorem}[section]
\newtheorem{corollary}[theorem]{Corollary}

\theoremstyle{definition}
\newtheorem{lemma}[theorem]{Lemma}
\newtheorem{definition}[theorem]{Definition}
\newtheorem{proposition}[theorem]{Proposition}

\newtheorem{remark}[theorem]{Remark}

\begin{document}
	
	\title[ ]{Computations of parabolic character schemes of knots}

    \author{Yunhi Cho}
	\address{Department of Mathematics, University of Seoul, Seoul, 02504, Korea}
	\email{yhcho@uos.ac.kr}
	
	\author{Hyuk Kim}
	\address{Department of Mathematical Sciences, Seoul National University, Seoul, 08826, Korea}
	\email{hyukkim@snu.ac.kr}

	\author{Seonhwa Kim}
	\address{Department of Mathematics, Natural Science Research Institute, University of Seoul, Seoul, 02504, Korea}
	\email{seonhwa17kim@gmail.com}
		
	\author{Seokbeom Yoon}
	\address{Department of Mathematics, Center for Quantum Computing, Chonnam National University, Gwangju, 61186, Korea}
	\email{sbyoon15@gmail.com}
		
	\keywords{Parabolic representations, parabolic quandles, parabolic character scheme, knot determinant}
	
	\date{\today}
	
	\begin{abstract}
		We compute parabolic $\mathrm{SL}_2(\mathbb{C})$-character schemes of knots using the parabolic quandle. To this end, we introduce sign-refined arc-colorings and show that their sign data encode the obstruction classes of the induced parabolic representations. We also establish a correspondence between the schemes defined by sign-refined arc-colorings and the parabolic character scheme. This yields a practical diagrammatic method for computing complete lists of parabolic characters, together with their multiplicities and obstruction classes. Using this method, we verify a conjecture of B\'{e}nard and Detcherry for all small knots with at most $12$ crossings.
	\end{abstract}
	\maketitle

    \section{Introduction}\label{sec:introduction}
	
	Parabolic representations of two-bridge knots were extensively studied by Riley \cite{riley_parabolic_1972,riley_parabolic_1975}. His explicit computations produced many important examples and played a significant role in the early development of three-dimensional geometry and topology, alongside Thurston's work on hyperbolic structures.
	
	One of the main difficulties in computing parabolic representations is that the defining equations quickly become complicated when written directly from the knot group. Riley's method is remarkably effective for two-bridge knots, but for general knots one needs a more flexible system of equations that is both practical and complete, in the sense that it detects all parabolic representations with a manageable amount of computation. The parabolic quandle provides such a diagrammatic framework for parabolic $\mathrm{PSL}_2(\BC)$-representations by encoding them as vector-colorings of a knot diagram; see, e.g., \cite{inoue_quandle_2013, cho_quandle_2018,JoKim}. 
	
	\subsection{Overview}
    In this paper, we employ the quandle method for computing the parabolic $\mathrm{SL}_2(\BC)$-character scheme of a knot. A direct adaptation of this framework leads to a sign ambiguity, since a vector and its negative determine the same matrix. We remove this ambiguity by assigning a sign to each crossing, leading to the notion of a sign-refined arc-coloring. Although this refinement is elementary, it admits a natural theoretical interpretation: the product of the signs assigned to the crossings agrees with the obstruction class of the associated parabolic representation (Theorem~\ref{thm:obs}).

    For each sign assignment, the defining equations of sign-refined arc-colorings determine a scheme. We show that it is a $\mu_2$-torsor over the corresponding obstruction-class component of the parabolic representation scheme (Theorem~\ref{prop.mu2torsor}). This provides a natural link between sign-refined arc-colorings and the parabolic representation scheme, and hence the parabolic character scheme. We then introduce an open cover of the parabolic character scheme such that, on each open chart, every $\SL_2$-orbit of arc-colorings admits a unique normalized representative (Theorem~\ref{thm:normalizedchart}). This makes explicit computations of the parabolic character scheme feasible.
	
	We illustrate the method through examples, beginning with the standard example $4_1$ and then turning to more complicated cases such as $8_{18}$. In particular, we show that, with a manageable amount of computation, the knot $8_{18}$ has exactly $26$ parabolic characters: two with positive obstruction class and twenty-four with negative obstruction class. This agrees with a conjecture of  B\'{e}nard and Detcherry \cite{BD} for small knots:
	\begin{equation} \label{conj.bd}
		n_- - n_+ = \frac{\det(K) -1}{2}  \,.
	\end{equation}
	Here, $n_\pm$ respectively denote the numbers of nontrivial parabolic characters with positive and negative obstruction classes, counted with multiplicity\footnote{The conjecture fails if the multiplicity is ignored. Thus, verifying the conjecture requires computing the parabolic character scheme rather than the underlying character variety.}, and $\det(K)$ denotes the determinant of the knot $K$. Note that the smallness of $K$ ensures that $n_\pm$ are finite.

	\subsection{Experiment}
	With computer assistance, the third author and Philip Choi computed complete lists of parabolic characters for all $2977$ knots with at most $12$ crossings, together with their multiplicities and obstruction classes \cite{diagramsite}. The computations depend crucially on the order in which the variables are eliminated. After testing a large number of possible orderings by computer, we found an effective one and completed the computations for all such knots. These data yield the following observations:
	
	\begin{itemize}
		\item There are $2977$ knots with at most $12$ crossings. Among them, $93$ knots have positive-dimensional components in their parabolic character schemes. By \cite{culler_varieties_1983}, such knots are necessarily large. Further discussion of these knots can be found in \cite{CPY26}.
			
		\item There are $1958$ small knots with at most $12$ crossings, and we verified Equation~\eqref{conj.bd} for all of them. Interestingly, although the conjecture is formulated for small knots, it also holds for all but ten of the $926$ large knots whose parabolic character schemes are finite.
		
		\item The proportion of knots with at least one parabolic character of positive obstruction class increases rapidly with the crossing number. More precisely, among knots with $8$, $9$, $10$, $11$, and $12$ crossings, the numbers of such knots are $7$ out of $21$ ($33.3\%$), $23$ out of $49$ ($46.9\%$), $111$ out of $165$ ($67.3\%$), $448$ out of $552$ ($81.2\%$), and $1964$ out of $2176$ ($90.3\%$), respectively. In total, $2553$ of the $2977$ knots have at least one parabolic character with positive obstruction class.
		
		\item Every non-alternating knot with at most $12$ crossings admits a parabolic character with positive obstruction class; equivalently, it admits a boundary-unipotent $\mathrm{SL}_2(\BC)$-representation.
	\end{itemize}
	
    Another novelty of our method is its compatibility with the octahedral decomposition of knot complements, which has been extensively studied in \cite{kim_octahedral_2018,kim_octahedral_2019,mcphail-snyder_hyperbolic_2022,Mc25}. In particular, the resulting arc-colorings can be used directly in known diagrammatic formulas for cusp shapes and complex volumes. Numerical values of the cusp shape and complex volume for each parabolic character are available in \cite{diagramsite}.

		
	\subsection{Organization}
	The paper is organized as follows. In Section~\ref{sec.prelim}, we collect basic definitions and properties of parabolic characters and the parabolic quandle. In Section~\ref{sec.quandle}, we introduce a sign-refined version of the parabolic quandle, prove its relation with obstruction classes, and establish a correspondence between the schemes defined by sign-refined arc-colorings and the parabolic character schemes. In Section~\ref{sec.example}, we present explicit computations and illustrate how the quandle method can be used in practice. In Section~\ref{sec.bd}, we examine the B\'{e}nard--Detcherry conjecture for all knots with at most $12$ crossings.
    
    \section{Parabolic representations and the parabolic quandle} \label{sec.prelim}
	
	In this section, we collect some basic definitions and properties of parabolic representations and the parabolic quandle. For completeness, we present the necessary material in a self-contained manner; the reader may find \cite{heusener2023scheme,milne1980etale} useful.

	\subsection{Parabolic representations}
	
	Let $K$ be a knot in $S^3$, and  $\calG(K)$ denote its knot group $\pi_1(S^3\setminus K)$. We say that an $\SL_2(\BC)$-representation of $\calG(K)$ is \emph{parabolic} if every meridian is mapped to a matrix of trace $2$. Since all meridians are conjugate in $\calG(K)$, it suffices to require that a single meridian be mapped to a trace-$2$ matrix.
		
	Fix a diagram $D$ of $K$. The associated Wirtinger presentation of $\calG(K)$ is of the form
	$$
	\calG(K)=\langle g_1,\ldots,g_n\mid r_1,\ldots,r_n\rangle,
	$$
	where $n$ is the number of crossings in $D$, and the generators $g_i$ and relations $r_i$ arise from the arcs and crossings of $D$, respectively. Then, computing parabolic representations amounts to finding $n$ trace-$2$ matrices satisfying the Wirtinger relations. Precisely, for each $i=1,\ldots, n$, assign a copy $A_i$ of the ring
	\begin{align*}
		A=\BC[a,b,c,d]/(a d-b c-1, \, a+d-2)
	\end{align*}    
	to the $i$-th arc of $D$, and let $I\subset A^{\otimes n} = \bigotimes_{i=1}^n A_i$ be the ideal generated by the $4n$ equations arising from the crossings of $D$, with each crossing contributing four equations obtained by equating the matrix entries in the Wirtinger relation:
	\begin{equation} \label{eqn.wirtinger}
		\begin{matrix}
			\vcenter{\hbox{\begin{tikzpicture}
						\draw[-stealth,thick] (0,0) -- (1,1);
						\draw[line width=5pt,white]  (0,1)--(1,0);
						\draw[-stealth,thick]  (0,1)--(1,0) ;
						\node at (1,1) [xshift=1.5ex] {$j$};
						\node at (0,0) [xshift=-1ex] {$i$};
						\node at (1,0)  [xshift=1.5ex] {$k$};
					\end{tikzpicture}
			}}
			\longrightarrow
			\begin{pmatrix} a_j & b_j \\ c_j & d_j \end{pmatrix} =
			\begin{pmatrix} d_k & -b_k \\ -c_k & a_k \end{pmatrix}
			\begin{pmatrix} a_i & b_i \\ c_i & d_i \end{pmatrix} \begin{pmatrix} a_k & b_k \\ c_k & d_k \end{pmatrix}  , \\		
			\vcenter{\hbox{\begin{tikzpicture}
						\draw[-stealth,thick]  (0,1)--(1,0) ;
						\draw[line width=5pt,white]  (0,0)--(1,1);
						\draw[-stealth,thick] (0,0) -- (1,1);
						\node at (1,1) [xshift=1.5ex] {$k$};
						\node at (0,1) [xshift=-1ex] {$i$};
						\node at (1,0)  [xshift=1.5ex] {$j$};
					\end{tikzpicture}
			}}	\longrightarrow
			\begin{pmatrix} a_j & b_j \\ c_j & d_j \end{pmatrix} =
			\begin{pmatrix} a_k & b_k \\ c_k & d_k \end{pmatrix}
			\begin{pmatrix} a_i & b_i \\ c_i & d_i \end{pmatrix} 
			\begin{pmatrix} d_k & -b_k \\ -c_k & a_k \end{pmatrix}  \,.
		\end{matrix} 
	\end{equation}
	
	\begin{definition}
		The \emph{parabolic representation scheme} of $K$ is defined as 
		$$ 
            \calR(K) := \mathrm{Spec} \left( A^{\otimes n} / I \right).
        $$
		It carries an $\SL_2(\BC)$-action induced by simultaneous conjugation of the matrix variables. The \emph{parabolic character scheme} is then defined as the GIT quotient
		$$ 
            \calX(K) :=  \mathrm{Spec} \left(\left( A^{\otimes n} / I \right)^{\mathrm{SL}_2(\BC)} \right).
        $$		
		We denote by $\pi :  \calR(K) \rightarrow \calX(K)$ the canonical surjective morphism.
	\end{definition}
	
	\subsection{The parabolic quandle}
	
	The parabolic quandle provides a convenient way to encode trace-$2$ matrices by two-dimensional vectors. More precisely, consider the map
	\begin{equation} \label{eqn:m}
		\binom{x}{y} \longmapsto
		\begin{pmatrix}
			1-xy & x^2 \\
			-y^2 & 1+xy
		\end{pmatrix},
	\end{equation}
	which is surjective onto the set of trace-$2$ matrices. This map is $2$-to-$1$ away from the origin, since the vectors $(x,y)$ and $-(x,y)$ have the same image. For a two-dimensional vector $\alpha$ we denote by $[\alpha]$ the trace-$2$ matrix given in~\eqref{eqn:m}. Using the standard quandle notation,  define
	\begin{equation*}
		\alpha \triangleright \beta:=[\beta]^{-1} \alpha, \qquad
		\alpha\triangleright^{-1}\beta:=[\beta]\alpha \qquad \text{for } \alpha,\beta\in\BC^2.
	\end{equation*}
	A direct computation shows that
	\begin{equation*}
		[g \alpha]=g[\alpha]g^{-1} \qquad
		\text{for } \alpha\in\BC^2\text{ and }g\in\SL_2(\BC)\,.
	\end{equation*}
	It follows that $P:=(\BC^2,\triangleright)$ is a quandle, that is, it satisfies
	$$
	   \alpha \triangleright \alpha= \alpha, \qquad
	   (\alpha \triangleright \beta)\triangleright^{-1}\beta=\alpha, \qquad
      (\alpha\triangleright \beta)\triangleright \gamma = (\alpha\triangleright \gamma)\triangleright( \beta\triangleright \gamma).
	$$
	We call $P$ the \emph{parabolic quandle}. It is equipped with the antisymmetric bilinear form
	$$
	   \langle\cdot,\cdot\rangle\colon P\times P\longrightarrow\BC, \qquad \langle \alpha,\beta \rangle:=\det( \alpha,\beta),
	$$
	with respect to which the quandle operation takes the form
	$$
	   \alpha\triangleright^{\pm1}\beta = \alpha \pm\langle \alpha,\beta\rangle \beta \qquad \text{for } \alpha,\beta \in P.
	$$
	
	At the level of $\SL_2(\BC)$-representations, the parabolic quandle replaces trace-$2$ matrices with 2-dimensional vectors. This yields an \emph{arc-coloring}: an assignment of a quandle element $\alpha_i\in P$ to each arc of $D$ satisfying the following relation at every crossing.
	\begin{equation*}
		\begin{matrix}
			\vcenter{\hbox{\begin{tikzpicture}
						\draw[-stealth,thick] (0,0) -- (1,1);
						\draw[line width=5pt,white]  (0,1)--(1,0);
						\draw[-stealth,thick]  (0,1)--(1,0) ;
						\node at (1,1) [xshift=1.5ex] {$j$};
						\node at (0,0) [xshift=-1ex] {$i$};
						\node at (1,0)  [xshift=1.5ex] {$k$};
					\end{tikzpicture}
			}} \longrightarrow
			\pm \alpha_j=\alpha_i \triangleright \alpha_k ,& \qquad		
			\vcenter{\hbox{\begin{tikzpicture}
						\draw[-stealth,thick]  (0,1)--(1,0) ;
						\draw[line width=5pt,white]  (0,0)--(1,1);
						\draw[-stealth,thick] (0,0) -- (1,1);
						\node at (1,1) [xshift=1.5ex] {$k$};
						\node at (0,1) [xshift=-1ex] {$i$};
						\node at (1,0)  [xshift=1.5ex] {$j$};
					\end{tikzpicture}
			}} \longrightarrow
			\pm \alpha_j=\alpha_i \triangleright^{-1}  \alpha_k	\,.
		\end{matrix}
	\end{equation*}
    Under the map in~\eqref{eqn:m}, these relations are equivalent to the Wirtinger relations in~\eqref{eqn.wirtinger}. However, they involve a sign ambiguity, because the map in~\eqref{eqn:m} is 2-fold.
	We refer to $\alpha_i$ as the \emph{arc color} of the corresponding arc and write an arc-coloring as the tuple $\boldsymbol{\alpha}=(\alpha_1,\ldots,\alpha_n)$ of its arc colors.
		
	At the level of rings, the ring $A$ is replaced with $B=\BC[x,y]$ through the homomorphism
	$\varphi\colon A\rightarrow B$ defined by
	\begin{align*}
		\varphi(a)&=1-xy, &
		\varphi(b)&=x^2, &
		\varphi(c)&=-y^2, &
		\varphi(d)&=1+xy.
	\end{align*}
	 The restriction of the induced morphism $\varphi^\ast\colon \mathrm{Spec}(B)\rightarrow\mathrm{Spec}(A)$ to $\BC$-points agrees with the map in~\eqref{eqn:m}. Lemma~\ref{lem.inj} below implies that $\varphi$ is injective and integral. Hence, by the lying-over theorem, $\varphi^\ast$ is surjective. 
     
	\begin{lemma}\label{lem.inj}
		The homomorphism $\varphi$ is injective, and its image is
		\[
		\operatorname{Im}(\varphi) = \mathbb{C}[x^2,xy,y^2].
		\]
		Consequently, $A \cong \mathbb{C}[x^2,xy,y^2] \hookrightarrow \mathbb{C}[x,y]=B$.
	\end{lemma}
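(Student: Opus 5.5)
First we compute the image. Since $\varphi(b)=x^2$ and $\varphi(c)=-y^2$, and $xy=\tfrac12(\varphi(d)-\varphi(a))$, the image of $\varphi$ contains $x^2$, $xy$ and $y^2$. Conversely, each of $\varphi(a),\varphi(b),\varphi(c),\varphi(d)$ is a polynomial in $x^2$, $xy$ and $y^2$. Hence $\operatorname{Im}(\varphi)=\mathbb{C}[x^2,xy,y^2]$.

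For injectivity, we first simplify $A$. The relation $a+d-2$ lets us eliminate $d=2-a$. The relation $ad-bc-1$ then becomes $a(2-a)-bc-1=-(a-1)^2-bc$. Setting $u=a-1$, we obtain
\[
A\cong \mathbb{C}[u,b,c]/(u^2+bc).
\]
Under this identification, $\varphi$ sends $u\mapsto -xy$, $b\mapsto x^2$ and $c\mapsto -y^2$. We check that $u^2+bc\mapsto x^2y^2-x^2y^2=0$, so the map is well defined. It remains to show that the kernel of $\mathbb{C}[u,b,c]\to\mathbb{C}[x,y]$ is exactly the principal ideal $(u^2+bc)$.

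This kernel computation is the main step, and we would do it by a normal-form argument. Modulo $u^2+bc$, every element of $\mathbb{C}[u,b,c]$ can be written uniquely as $f_0(b,c)+u\,f_1(b,c)$, by repeatedly replacing $u^2$ with $-bc$. Its image in $\mathbb{C}[x,y]$ is
\[
f_0(x^2,-y^2)-xy\,f_1(x^2,-y^2).
\]
The first term is a combination of monomials $x^{2i}y^{2j}$, in which both exponents are even. The second term is a combination of monomials $x^{2i+1}y^{2j+1}$, in which both exponents are odd. These two sets of monomials are disjoint, so the image vanishes only if both terms vanish separately. Finally, $x^2$ and $-y^2$ are algebraically independent, so $f_0(x^2,-y^2)=0$ forces $f_0=0$, and likewise $f_1=0$. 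Therefore $\varphi$ is injective, and $A\cong\mathbb{C}[x^2,xy,y^2]\subset\mathbb{C}[x,y]$.

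The integrality needed just before the lemma is then immediate: $x$ and $y$ are roots of the monic polynomials $T^2-x^2$ and $T^2-y^2$, whose coefficients lie in the image of $\varphi$.
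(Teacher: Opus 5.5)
Your proof is correct and takes essentially the same route as the paper. Both arguments eliminate $d$, rewrite $A$ as a quadric $\mathbb{C}[u,b,c]/(u^2+bc)$ (the paper instead sets $u=1-a$, $w=-c$ to get $u^2=vw$), and prove injectivity by writing elements in the normal form $f_0+uf_1$ and separating even--even from odd--odd monomials. Your explicit use of the algebraic independence of $x^2$ and $y^2$, and your closing integrality remark, only spell out steps the paper leaves implicit.
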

	
	\begin{proof}
		Set $u=1-a$, $v=b$, and $w=-c$.
		Since $d=2-a=1+u$, the relation $ad-bc=1$ becomes $u^2=vw$.
		Hence $A \cong \mathbb{C}[u,v,w]/(u^2-vw)$.
		The generators $u,v,w$ are mapped to $xy, x^2,y^2$ under $\varphi$, respectively.
		Thus the image of $\varphi$ is $\mathbb{C}[x^2,xy,y^2]$.
		
		Every element of $\mathbb{C}[u,v,w]/(u^2-vw)$ can be written in the form $F(v,w)+uG(v,w)$.
		If its image under $\varphi$ is zero, then
		\[
		  F(x^2,y^2)+xy\,G(x^2,y^2)=0.
		\]
		The first summand consists only of monomials $x^{2i}y^{2j}$, whereas the
		second consists only of monomials $x^{2i+1}y^{2j+1}$. Therefore, they
		cannot cancel each other, and hence $F=G=0$. It follows that $\varphi$ is injective.
	\end{proof}
	
	The coordinate ring $A^{\otimes n}/I$ of the scheme $\calR(K)$ is accordingly replaced by
	$$
	   B^{\otimes n}\otimes_{A^{\otimes n}}(A^{\otimes n}/I) \cong B^{\otimes n}/IB^{\otimes n},
	$$
	as shown in the following diagram:
	\begin{equation} \label{eqn.diagram}	
		\begin{tikzcd}
			A^{\otimes n}
			\arrow[r, "\varphi^{\otimes n}"]
			\arrow[d, two heads]
			&
			B^{\otimes n}
			\arrow[d, two heads]
			\\
			A^{\otimes n}/I
			\arrow[r]
			&
			B^{\otimes n}\otimes_{A^{\otimes n}}(A^{\otimes n}/I)
			\cong B^{\otimes n}/IB^{\otimes n}.
		\end{tikzcd}
	\end{equation}

	\begin{definition} 
        The \emph{parabolic quandle scheme} of $D$ is defined as $$\calQ(D) := \Spec (B^{\otimes n}/J)$$
		where $J := I B^{\otimes n} \subset B^{\otimes n}$.
	\end{definition}
    
	Let $\rho$ be the morphism induced from the bottom horizontal map in~\eqref{eqn.diagram}:
	\begin{equation} \label{eqn.rho}
		\rho : \calQ(D) \longrightarrow \calR(K)	\,.
	\end{equation}
	Since $\varphi^*$ is finite and surjective, its $n$-fold product is also finite and surjective. As $\rho$ is obtained from this morphism by base change along $\mathcal R(K)\to\operatorname{Spec}(A^{\otimes n})$, the morphism $\rho$ is finite and surjective. 
    For a point $\mathfrak{p}\in\calQ(D)$, we often identify its image $\rho(\mathfrak{p})$ under $\rho$ with the representation $\rho_\mathfrak{p}$ obtained by evaluation at $\mathfrak{p}$
    \begin{equation}\label{eqn:rhop}
    \rho_{\mathfrak p}: \calG(K)\longrightarrow\SL_2  (\kappa(\mathfrak p)), \qquad
    g_i\longmapsto[\alpha_i(\mathfrak p)].
    \end{equation}
    Here $\kappa(\mathfrak p)$ denotes the residue field at $\mathfrak p$. In particular, if $\mathfrak{p}$ is a $\BC$-point, then $\rho_{\mathfrak{p}}$ is a parabolic $\SL_2(\BC)$-representation.

    \subsection{Nontrivial loci} \label{sec.nontri}
    Recall that the map in~\eqref{eqn:m} is 2-to-1 away from the origin. To obtain a genuine 2-to-1 correspondence, we exclude the identity matrix and the zero vector as follows.
	
	\begin{lemma}\label{lem.torsor}
		Let $\mathfrak{m}=(a-1,\,b,\,c,\, d-1)$
		be the maximal ideal of $A$ corresponding to the identity matrix. Then the morphism $\varphi^\ast : \Spec(B) \rightarrow \Spec(A)$ restricts to a $\mu_2$-torsor
		\[
		      \varphi^\ast : \Spec(B)\setminus\{(x,y)\} \longrightarrow \Spec(A)\setminus\{\mathfrak{m}\},
		\]
		where $\mu_2$ acts diagonally: $\delta\cdot(x,y)=(\delta x,\delta y)$ for $\delta\in\mu_2$.
	\end{lemma}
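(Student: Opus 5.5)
We identify $A$ with its image $\mathbb{C}[x^2,xy,y^2]=B^{\mu_2}$ via Lemma~\ref{lem.inj}, where $\mu_2$ acts on $B$ by $x\mapsto -x$, $y\mapsto -y$. Then $\varphi^\ast\colon \Spec(B)\to\Spec(A)$ is the quotient map $\mathbb{A}^2\to\mathbb{A}^2/\mu_2$. The preimage of $\mathfrak m$ is exactly the origin $(x,y)$: $\mathfrak m B=(xy,x^2,y^2)$, whose radical is $(x,y)$. Hence $\varphi^\ast$ restricts to a morphism $U:=\Spec(B)\setminus\{(x,y)\}\to V:=\Spec(A)\setminus\{\mathfrak m\}$, and $\varphi^\ast$ is $\mu_2$-invariant. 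It remains to show that this restriction is a $\mu_2$-torsor. Concretely, we check that it is finite étale (equivalently, faithfully flat) and that the action map
\[
\mu_2\times U\to U\times_V U
\]
is an isomorphism. We verify this locally on the open cover $V=D(v)\cup D(w)$ with $v=x^2$, $w=y^2$. This is indeed a cover: $u^2=vw$, so $v=w=0$ forces $u=0$, i.e.\ the point $\mathfrak m$. The preimages in $U$ are $D(x^2)=D(x)$ and $D(y^2)=D(y)$, which cover $U$.

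On the chart $D(v)$ we compute
\[
A_v=\mathbb{C}[v^{\pm1},u,w]/(u^2-vw)=\mathbb{C}[v^{\pm1},u],
\]
since $w=u^2/v$. On the other side,
\[
B_{x^2}=\mathbb{C}[x^{\pm1},y].
\]
Setting $t=y/x=u/v\in A_v$, we get $A_v=\mathbb{C}[v^{\pm1},t]$ and $B_{x}=\mathbb{C}[x^{\pm1},t]=A_v[x]/(x^2-v)$. Because $v$ is a unit, $x^2-v$ is separable, so $B_x$ is free of rank $2$ over $A_v$ with basis $1,x$, and it is étale. The action map reduces to the standard computation for the Kummer cover $R[x]/(x^2-v)$ with $v\in R^\times$ and $2$ invertible. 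The isomorphism
\[
B_x\otimes_{A_v}B_x \;\cong\; B_x\times B_x,\qquad x\otimes 1\mapsto (x,x),\quad 1\otimes x\mapsto (x,-x),
\]
is given explicitly using the idempotents
\[
\tfrac12\bigl(1\pm (x\otimes x)/v\bigr).
\]
The chart $D(w)$ is handled symmetrically with $s=x/y$. Since being a torsor is local on the base, the charts glue, and the claim follows.

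The only delicate point is identifying the localizations correctly, in particular that $A_v$ is the polynomial ring $\mathbb{C}[v^{\pm1},t]$ and that the preimage of $D(v)$ is precisely $D(x)$. Once these identifications are in place, the torsor property is the standard Kummer-cover fact in characteristic $0$.
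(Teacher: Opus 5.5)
Your proof is correct and follows essentially the same route as the paper: the same cover $D(b)\cup D(c)$ (your $D(v)\cup D(w)$), the same identification $B_x\cong A_b[T]/(T^2-b)$, and the same Kummer-type trivialization of $B_x\otimes_{A_v}B_x$, where your element $(x\otimes x)/v$ is exactly the paper's $\delta=ST^{-1}$. Your explicit check that $\mathfrak m B=(xy,x^2,y^2)$ has radical $(x,y)$, and your explicit use of the action map $\mu_2\times U\to U\times_V U$, fill in points the paper leaves implicit.
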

	
	\begin{proof}
		Since $d=2-a$, the relation $ad-bc=1$ is equivalent to $(1-a)^2=-bc$.
		It follows that $D(b)$ and $D(c)$ form an open cover of $\mathrm{Spec}(A)\setminus \{ \mathfrak{m}\}$. In addition, we have
		\[
    		(\varphi^\ast)^{-1}(D(b))=D(x),	\qquad 	(\varphi^\ast)^{-1}(D(c))=D(y).
		\]
		where $\mathrm{Spec}(B)\setminus\{(x,y)\} = D(x)\cup D(y)$.
	
		Over $D(b)$, consider the $A_b$-algebra isomorphism
		$A_b[T]/(T^2-b) \xrightarrow{\sim}	B_x$
		given by
		\[
		a\mapsto 1-xy,\qquad
		b\mapsto x^2,\qquad
		c\mapsto -y^2,\qquad
		d\mapsto 1+xy,\qquad
		T\mapsto x.
		\]
		Its inverse is given by $x \mapsto T$ and $y \mapsto (1-a)b^{-1}T$. Put $C_b:=A_b[T]/(T^2-b)$.
		The morphism
		\[
		      p: \Spec(C_b)\longrightarrow\Spec(A_b)
		\]
		is finite \'{e}tale and surjective, since $C_b$ is free of rank two over $A_b$ and $2T$ is invertible in $C_b$. To show that $p$ is locally trivial, we consider its pullback along $p$ itself. The resulting morphism is
		\[
		      \Spec(C_b)\times_{\Spec(A_b)}\Spec(C_b) \longrightarrow \Spec(C_b),
		\]
		whose total space has coordinate ring $C_b\otimes_{A_b}C_b$. Writing the generator of the second copy of $C_b$ as $S$, we have
		\[
		      C_b\otimes_{A_b}C_b \cong C_b[S]/(S^2-b) = C_b[S]/(S^2-T^2).
		\]
		Since $T$ is invertible, setting $\delta = ST^{-1}$ gives $C_b\otimes_{A_b}C_b \cong C_b[\delta]/(\delta^2-1)$.
		Hence
		\[
		      \Spec(C_b)\times_{\Spec(A_b)}\Spec(C_b) \cong \mu_2\times\Spec(C_b)
		\]
		This proves the local triviality of $\varphi^\ast$ over $D(b)$. The same argument applies over $D(c)$. 
	\end{proof}
	
	Accordingly, we exclude the trivial character $\chi_{\mathrm{triv}} \in \calX(K)$, sending every element of $\calG(K)$ to $2$, and work with the open subschemes
	\begin{align*}
		\calX^\times(K)	&:=	\calX(K)\setminus{\chi_{\mathrm{triv}}}, \\
		\calR^\times(K)	&:=	\calR(K)\setminus \pi^{-1}(\chi_{\mathrm{triv}}), \\
		\calQ^\times(D)	&:=	\calQ(D)\setminus (\pi\circ\rho)^{-1}(\chi_{\mathrm{triv}}).
	\end{align*}
	By abuse of notation, we use the same symbols for the restricted surjections
	$$
	\pi\colon\calR^\times(K)\longrightarrow\calX^\times(K), \qquad
	\rho\colon\calQ^\times(D)\longrightarrow\calR^\times(K).
	$$
	\begin{lemma} \label{lem:zerocol}Let $\mathfrak p\in\calQ^\times(D)$. Then no arc color vanishes at $\mathfrak p$. That is, if the arc color of the $i$-th arc is written as $\alpha_i=\binom{x_i}{y_i}$, then $x_i$ and $y_i$ cannot both vanish at $\mathfrak p$ for any $i$.
	\end{lemma}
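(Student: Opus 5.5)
The plan is to argue by contradiction. Suppose that at some $\mathfrak p\in\calQ^\times(D)$ the color $\alpha_i$ vanishes for some arc $i$, i.e.\ $x_i(\mathfrak p)=y_i(\mathfrak p)=0$ in $\kappa(\mathfrak p)$. We then show that every arc color vanishes at $\mathfrak p$. From this we deduce that $\rho_{\mathfrak p}$ is the trivial representation. Hence $\pi(\rho(\mathfrak p))=\chi_{\mathrm{triv}}$, which contradicts $\mathfrak p\in\calQ^\times(D)$.

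\emph{Propagation along the knot.} We work in the residue field $\kappa(\mathfrak p)$, where the crossing relations hold. The defining ideal $J$ is generated by the matrix Wirtinger relations, so at $\mathfrak p$ we have, for instance, $[\alpha_j]=[\alpha_k]^{-1}[\alpha_i][\alpha_k]$. Since $[\cdot]$ is given by the formula~\eqref{eqn:m}, we get $[\alpha_i]=\mathrm{Id}$ if and only if $x_i^2=y_i^2=x_iy_i=0$, i.e.\ (in a field) if and only if $\alpha_i=0$. So if the incoming underarc $i$ has $\alpha_i=0$, then $[\alpha_i]=\mathrm{Id}$. Conjugation gives $[\alpha_j]=\mathrm{Id}$, hence $\alpha_j=0$. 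Conversely, $[\alpha_j]=\mathrm{Id}$ forces $[\alpha_i]=\mathrm{Id}$. The same holds at crossings of the other sign. Walking along the knot, the under-arcs at successive crossings are exactly consecutive arcs of $D$. Since $K$ is a knot (connected), every arc is reached, so all $[\alpha_l]=\mathrm{Id}$ and all $\alpha_l$ vanish at $\mathfrak p$.

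\emph{Conclusion.} Then $\rho_{\mathfrak p}$ sends every Wirtinger generator to the identity, so $\rho_{\mathfrak p}$ is trivial. Its character, i.e.\ the image of $\rho(\mathfrak p)$ under $\pi$, is the trivial character. Scheme-theoretically, every $\SL_2$-invariant function of the form $\Tr(w)$ evaluates to $2$ at $\rho(\mathfrak p)$. These trace functions generate the invariant ring up to radical, or one can appeal directly to the identification of $\chi_{\mathrm{triv}}$ with the image of the identity-representation point. Thus $\pi\rho(\mathfrak p)=\chi_{\mathrm{triv}}$, contradicting the definition of $\calQ^\times(D)$.

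\emph{Main obstacle.} The only delicate step is this last one: identifying, for a non-closed point $\mathfrak p$, that $\pi(\rho(\mathfrak p))$ is the closed point $\chi_{\mathrm{triv}}$. To handle it, I will note that the point $\rho(\mathfrak p)\in\calR(K)$ lies in the closed subscheme cut out by $a_l-1,b_l,c_l,d_l-1$ for all $l$. That subscheme is the single $\BC$-point given by the trivial representation, so $\rho(\mathfrak p)$ equals that point and $\mathfrak p$ is automatically a closed point. Its image under $\pi$ is $\chi_{\mathrm{triv}}$ by definition.
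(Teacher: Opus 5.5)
Your proof is correct and follows the same route as the paper: propagate $[\alpha_i]=\mathrm{Id}$ along consecutive under-arcs via the Wirtinger relations, use that $[\alpha]=\mathrm{Id}$ forces $\alpha=0$ in the residue field, and conclude that $\mathfrak p$ maps to $\chi_{\mathrm{triv}}$. Your extra observation that $\mathfrak p$ must then be the maximal ideal $(x_1,y_1,\ldots,x_n,y_n)$, so that $\rho(\mathfrak p)$ is the closed point of the trivial representation, makes explicit a step the paper leaves implicit.
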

	
	\begin{proof}
		Recall that $\mathfrak p$ determines the representation
		$\rho_{\mathfrak p}$ as in~\eqref{eqn:rhop}.
		Suppose that $\alpha_i(\mathfrak p)=0$ for some $i$. Then 
		$ \rho_{\mathfrak p}(g_i)=I$.
		At a crossing adjacent to the $i$-th arc, the Wirtinger
		relation is of the form $g_j=g_k^{\mp 1}g_i g_k^{\pm1}$.
		It follows that $\rho_{\mathfrak p}(g_j)=I$. Continuing along the  diagram, we conclude that $\rho_{\mathfrak p}(g_j)=I$ for all $j$.
		If we write $\alpha_j(\mathfrak p)=\binom{s}{t}$, then
		\[
		[\alpha_j(\mathfrak p)]
		=
		\begin{pmatrix}
			1-st & s^2\\
			-t^2 & 1+st
		\end{pmatrix}.
		\]
		Thus $[\alpha_j(\mathfrak p)]=I$ implies $s^2=t^2=0$. Since
		they are in the residue field $\kappa(\mathfrak p)$, we have $s=t=0$. Hence all arc
		colors vanish at $\mathfrak p$, and $\rho_{\mathfrak p}$ is the trivial
		representation.
		Therefore, $(\pi\circ\rho)(\mathfrak p)=\chi_{\mathrm{triv}}$,
		contradicting $\mathfrak p\in\calQ^\times(D)$.
	\end{proof}
	
	
	\section{Sign-refined parabolic quandles}\label{sec.quandle}
	
	The relations defining an arc-coloring in the previous section have a sign ambiguity. In this section, we eliminate this ambiguity by assigning a sign to each crossing, and show that these sign data are related to the obstruction class of a parabolic representation.

	\subsection{Sign-refined colorings}
	
	Let $D$ be a diagram of a knot $K$ and let $n$ be the number of crossings of $D$. 
	For simplicity, we fix an ordering of the crossings, so that an assignment of signs to the crossings can be represented by an $n$-tuple of signs.
	
	Let $\boldsymbol{\epsilon} =(\epsilon_1,\dots,\epsilon_n)\in \{ \pm 1\}^n$ be a tuple of signs, where $\epsilon_\ell$ is assigned to the $\ell$-th crossing of $D$.
		An \emph{$\boldsymbol{\epsilon}$-refined arc-coloring} of $D$ is an assignment of a parabolic quandle element $\alpha_i \in P$ to each arc of $D$ satisfying the following relation at each crossing.
		\begin{equation*}
			\vcenter{\hbox{\begin{tikzpicture}
						\draw[-stealth,thick] (0,0) -- (1,1);
						\draw[line width=5pt,white]  (0,1)--(1,0);
						\draw[-stealth,thick]  (0,1)--(1,0) ;
						\node at (0.5,0.5) [above, yshift=0.4ex] {$\ell$};
						\node at (1,1) [xshift=1.5ex] {$j$};
						\node at (0,0) [xshift=-1ex] {$i$};
						\node at (1,0)  [xshift=1.5ex] {$k$};
					\end{tikzpicture}
			}} \longrightarrow
			\epsilon_\ell \, \alpha_j=\alpha_i \triangleright \alpha_k, \qquad
			\vcenter{\hbox{\begin{tikzpicture}
						\draw[-stealth,thick]  (0,1)--(1,0) ;
						\draw[line width=5pt,white]  (0,0)--(1,1);
						\draw[-stealth,thick] (0,0) -- (1,1);
						\node at (0.5,0.5) [above, yshift=0.4ex] {$\ell$};
						\node at (1,1) [xshift=1.5ex] {$k$};
						\node at (0,1) [xshift=-1ex] {$i$};
						\node at (1,0)  [xshift=1.5ex] {$j$};
					\end{tikzpicture}
			}} \longrightarrow
			\epsilon_\ell \, \alpha_j=\alpha_i \triangleright^{-1} \alpha_k.
		\end{equation*}	
	Note that the equation $\epsilon_\ell \, \alpha_j = \alpha_i \triangleright^{\pm 1} \alpha_k$ is equivalent to $\epsilon_\ell \, \alpha_i = \alpha_j \triangleright^{\mp 1} \alpha_k$. Therefore, at each crossing, the color of the over-arc together with  one of the two under-arc colors determines the remaining under-arc color.
	
	\begin{definition}
		The \emph{${\boldsymbol{\epsilon}}$-refined parabolic quandle scheme} is defined as 
		$$
		\calQ_{\boldsymbol{\epsilon}}(D):=
		\Spec \left(B^{\otimes n}/J_{\boldsymbol{\epsilon}}\right)
		$$
		where $J_{\boldsymbol{\epsilon}}$ is the ideal of $B^{\otimes n}$ generated by the defining equations for $\boldsymbol{\epsilon}$-refined arc-colorings. 
	\end{definition}
	
	It is clear that $J\subset J_{\boldsymbol{\epsilon}}$ and hence $\calQ_{\boldsymbol{\epsilon}}(D)$ is a closed subscheme of $\calQ(D)$. In particular, the morphism $\rho$ in~\eqref{eqn.rho} restricts to
	\begin{equation} \label{eqn.res}
		\rho_{\boldsymbol{\epsilon}} : \calQ_{\boldsymbol{\epsilon}}(D) \longrightarrow \calR(K)\, .
	\end{equation}
	As in Section~\ref{sec.nontri}, put $\calQ^\times_{\boldsymbol{\epsilon}}(D):=\calQ_{\boldsymbol{\epsilon}}(D)\setminus (\pi\circ\rho_{\boldsymbol{\epsilon}})^{-1}(\chi_\mathrm{triv})$.
	\begin{proposition}\label{prop:decomp} 
		The scheme $\calQ^\times(D)$ decomposes into a disjoint union of $\calQ^\times_{\boldsymbol{\epsilon}}(D)$:
		\begin{equation*}
			\calQ^\times(D) = \bigsqcup_{\boldsymbol{\epsilon}\in\{\pm1\}^n}
			\calQ_{\boldsymbol{\epsilon}}^\times(D).
		\end{equation*}
		In particular, each
		$\calQ_{\boldsymbol{\epsilon}}^\times(D)$
		is an open-and-closed subscheme of $\calQ^\times(D)$.
	\end{proposition}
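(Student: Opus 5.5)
The plan is to work crossing by crossing. At each crossing the defining ideal $J$ of $\calQ(D)$ is generated by the four Wirtinger equations, and the $\boldsymbol{\epsilon}$-refined ideal $J_{\boldsymbol{\epsilon}}$ contains, at the $\ell$-th crossing, the two linear equations $\epsilon_\ell\alpha_j=\beta$, where $\beta:=\alpha_i\triangleright^{\pm1}\alpha_k$. Since $[\beta]=[\alpha_k]^{\mp1}[\alpha_i][\alpha_k]^{\pm1}$, the Wirtinger relation at this crossing says exactly $[\alpha_j]=[\beta]$. Writing $\alpha_j=\binom{x}{y}$ and $\beta=\binom{s}{t}$, this becomes
\[
x^2=s^2,\qquad y^2=t^2,\qquad xy=st .
\]
Let $V_\pm$ be the closed subscheme of $\calQ(D)$ cut out by $\alpha_j=\pm\beta$. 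By the identities above, the underlying set of $\calQ(D)$ is $V_+\cup V_-$ at each crossing. Intersecting over all crossings, $\calQ(D)$ is set-theoretically the union of the closed subschemes $\calQ_{\boldsymbol{\epsilon}}(D)$.

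The key step is to show that on $\calQ^\times(D)$ the two pieces $V_+$ and $V_-$ are disjoint. Then they give an open-and-closed decomposition, and moreover the scheme structures agree. For disjointness, suppose a point $\mathfrak p$ lies in both. Then $\alpha_j(\mathfrak p)=\beta(\mathfrak p)=-\alpha_j(\mathfrak p)$ in $\kappa(\mathfrak p)$, which has characteristic zero. Hence $\alpha_j(\mathfrak p)=0$, contradicting Lemma~\ref{lem:zerocol}.

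For the scheme-theoretic statement I would localize. On $\calQ^\times(D)$ cover by the opens $D(x)$ and $D(y)$, which is possible by Lemma~\ref{lem:zerocol}; take $D(x)$. There
\[
(x-s)(x+s)=0,\qquad y(x\mp s)=\ldots
\]
and I would use the following local argument. Since $x$ is invertible and $2x=(x+s)+(x-s)$, the elements $x+s$ and $x-s$ generate the unit ideal. Hence by the Chinese remainder theorem $\mathcal O/(x^2-s^2)\cong \mathcal O/(x-s)\times\mathcal O/(x+s)$. On the factor where $x=s$, the relation $xy=st$ gives $x(y-t)=0$, so $y=t$. Similarly, where $x=-s$ we get $y=-t$. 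Thus, locally, the ideal generated by the Wirtinger equations equals the product (equivalently the intersection) of the ideals of $\alpha_j=\beta$ and $\alpha_j=-\beta$, and these two ideals are comaximal. Therefore each crossing splits the scheme into two open-and-closed pieces. Taking the product over all $n$ crossings gives the decomposition indexed by $\boldsymbol{\epsilon}\in\{\pm1\}^n$, and each piece restricted to the complement of $(\pi\circ\rho)^{-1}(\chi_{\mathrm{triv}})$ is $\calQ^\times_{\boldsymbol{\epsilon}}(D)$.

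I expect the main obstacle to be this scheme-theoretic (rather than set-theoretic) identification, that is, the local comaximality computation and its compatibility across the charts $D(x)$ and $D(y)$. The argument on $D(y)$ is symmetric, using $y\pm t$ in place of $x\pm s$. The remaining points are routine bookkeeping: the open sets $\calQ_{\boldsymbol{\epsilon}}^\times(D)$ are the complements of the same closed set, and the pieces glue because idempotent decompositions are canonical.
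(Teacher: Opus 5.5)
Your proposal is correct and follows essentially the paper's argument. You cover $\calQ^\times(D)$ by $D(x)\cup D(y)$ using Lemma~\ref{lem:zerocol}, and you show that locally $J_\ell=J_{\ell,+}\cap J_{\ell,-}$ with $J_{\ell,+}+J_{\ell,-}$ the unit ideal; the paper gets this by setting $e=s/x$ with $e^2=1$, while you use comaximality of $x\pm s$ and the Chinese remainder theorem. Where the paper combines crossings by iterating $L+(M\cap N)=(L+M)\cap(L+N)$ for coprime $M,N$, your crossing-by-crossing splitting $R/J\cong R/(J+J_{\ell,+})\times R/(J+J_{\ell,-})$ does the same job, since $J\subseteq J_{\boldsymbol{\epsilon}}$ makes the final factors exactly $R/J_{\boldsymbol{\epsilon}}$.
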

	
	\begin{proof}
		Let $J_\ell \subset B^{\otimes n}$ be the ideal generated by the corresponding Wirtinger relation at the $\ell$-th crossing, and let
		$J_{\ell,+}$ and $J_{\ell,-}$ be the ideals generated by two sign-refined arc-coloring relations at the same crossing. Thus,
		$$
		J=\sum_{\ell=1}^n J_\ell,\qquad
		J_{\boldsymbol{\epsilon}}=\sum_{\ell=1}^n J_{\ell,\epsilon_\ell}.
		$$
		Writing the Wirtinger relation at the crossing as
		$[u]=[v]$ for $u=\binom{x}{y}$, $v=\binom{s}{t}$, one has
		$$
		J_{\ell}=(x^2-s^2,\,xy-st,\,y^2-t^2), \quad 
		J_{\ell,+}=(x - s,\,y- t), \quad	J_{\ell,-}=(x + s,\,y+ t).
		$$
		On $D(x)$, put $e=s/x$. Then $e^2=1$ and $v=e u$ modulo $J_{\ell}$.
		Hence
		$$
		J_{\ell,+}/J_{\ell}=(e-1),	\qquad	J_{\ell,-}/J_{\ell}=(e+1).
		$$
		Since $2$ is invertible, the ideals $(e-1)$ and $(e+1)$ are coprime, and their product is zero. It follows that on $D(x)$, we have
		$$
		(J_{\ell})_x=(J_{\ell,+})_x \cap (J_{\ell,-})_x, \qquad
		(J_{\ell,+})_x +(J_{\ell,-})_x=(B^{\otimes n})_x.
		$$
		The same argument applies on $D(y)$, hence these identities hold wherever $u\neq0$.
		
		Let $\mathfrak p$ be a point of $\mathcal Q^\times(D)$.
		Lemma~\ref{lem:zerocol} shows that no arc color vanishes at $\mathfrak p$, hence  
		$$
		(J_\ell)_{\mathfrak p} = (J_{\ell,+})_{\mathfrak p} \cap (J_{\ell,-})_{\mathfrak p}, \qquad
		(J_{\ell,+})_{\mathfrak p}	+ (J_{\ell,-})_{\mathfrak p} = (B^{\otimes n})_{\mathfrak p}.
		$$
		for all $\ell$.	We use the identity
		$$ L+(M\cap N) = (L+M)\cap(L+N)	$$
		which holds whenever $M$, $N$ are coprime. Applying this identity repeatedly, we deduce that
		$$
		J_{\mathfrak p} = \bigcap_{\boldsymbol{\epsilon}\in\{\pm1\}^n} (J_{\boldsymbol{\epsilon}})_{\mathfrak p} \quad \textrm{for every } \mathfrak{p} \in \calQ^\times(D).
		$$
		Therefore, the subschemes $\calQ_{\boldsymbol{\epsilon}}^\times(D)$ cover $\calQ^\times(D)$.
		
		If $\boldsymbol{\epsilon}\neq\boldsymbol{\epsilon}'$, choose any $\ell$ such that $\epsilon_\ell\neq\epsilon_\ell'$. Then the two ideals $J_{\ell,{\epsilon_\ell}}$ and $J_{\ell,{\epsilon'_\ell}}$ are coprime on $\calQ^\times(D)$. Since they are contained in $J_{\boldsymbol{\epsilon}}$ and $J_{\boldsymbol{\epsilon}'}$, respectively, the restrictions of $J_{\boldsymbol{\epsilon}}$ and
		$J_{\boldsymbol{\epsilon}'}$ to $\calQ^\times(D)$ are also coprime. It follows that
		\[
		\calQ_{\boldsymbol{\epsilon}}^\times(D) \cap \calQ_{\boldsymbol{\epsilon}'}^\times(D) = \varnothing \,.
		\]
		This proves the asserted disjoint decomposition.
	\end{proof}

	For $\boldsymbol{\epsilon} =(\epsilon_1,\dots,\epsilon_n)\in \{ \pm 1\}^n$, we define its \emph{total sign} by
	\[
	\mathrm{sgn}(\boldsymbol{\epsilon}) = \prod_{\ell=1}^n \epsilon_\ell \in \{\pm1\}.
	\]
	
	\begin{proposition} \label{prop:Qebijection}
		Let	$\boldsymbol{\epsilon},\boldsymbol{\epsilon}' \in\{\pm1\}^n$ satisfy $\mathrm{sgn}(\boldsymbol{\epsilon})	= \mathrm{sgn}(\boldsymbol{\epsilon}')$. Then there is an isomorphism 
		\begin{equation} \label{eqn:PHI}
			\Psi : \mathcal Q_{\boldsymbol{\epsilon}}(D) \xrightarrow{\sim}
			\mathcal Q_{\boldsymbol{\epsilon}'}(D)
		\end{equation}
		such that $\rho_{\boldsymbol{\epsilon}'} \circ \Psi = \rho_{\boldsymbol{\epsilon}}$.
		In particular, it restricts to an isomorphism $\mathcal Q_{\boldsymbol{\epsilon}}^\times(D) \xrightarrow{\sim}
		\mathcal Q_{\boldsymbol{\epsilon}'}^\times(D)$.
	\end{proposition}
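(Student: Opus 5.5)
The plan is to build $\Psi$ out of sign flips of individual arc colors. Each flip changes the crossing signs in a controlled way, and since $[-\alpha]=[\alpha]$ it does not change the induced representation.

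For each arc $i$, let $\sigma_i$ be the $\BC$-algebra automorphism of $B^{\otimes n}$ given by $x_i\mapsto -x_i$, $y_i\mapsto -y_i$, with all other variables fixed. It fixes $\varphi^{\otimes n}(A^{\otimes n})$ pointwise, because $\varphi(a),\varphi(b),\varphi(c),\varphi(d)$ are quadratic in $x_i,y_i$. Hence $\sigma_i$ is compatible with the structure maps from $A^{\otimes n}$. We then track how $\sigma_i$ acts on the generators of the ideals $J_{\ell,\pm}$ introduced in the proof of Proposition~\ref{prop:decomp}.

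At crossing $\ell$ the relation is $\epsilon_\ell\alpha_j=\alpha_i\triangleright^{\pm1}\alpha_k = \alpha_i\pm\langle\alpha_i,\alpha_k\rangle\alpha_k$. This expression is linear in $\alpha_i$, linear in $\alpha_j$, and even in $\alpha_k$, since $\alpha_k$ enters only through $\langle\alpha_i,\alpha_k\rangle\alpha_k$.

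1. Negating the over-arc $\alpha_k$ leaves the relation unchanged.
2. Negating exactly one of the under-arcs $\alpha_i,\alpha_j$ turns it into the relation with $-\epsilon_\ell$.
3. Since $D$ is a knot diagram, each arc is the incoming under-arc of exactly one crossing and the outgoing under-arc of exactly one crossing. If an arc is both the incoming and outgoing under-arc at the same crossing (a kink), both sides negate and the sign is preserved.

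So $\sigma_i$ maps $J_{\boldsymbol\epsilon}$ onto $J_{\boldsymbol\epsilon''}$, where $\boldsymbol\epsilon''$ is $\boldsymbol\epsilon$ with the two signs at the endpoint crossings of arc $i$ flipped. If both endpoints are the same crossing, nothing changes. In all cases $\sigma_i$ preserves $\mathrm{sgn}$.

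The step I expect to be the main obstacle is a combinatorial lemma: every $\boldsymbol\epsilon'$ with the same total sign as $\boldsymbol\epsilon$ is reached by a composite of such flips. Order the arcs and crossings cyclically along the knot, so that arc $i$ runs from crossing $i$ to crossing $i+1$ (indices mod $n$). Then $\sigma_i$ flips $\epsilon_i$ and $\epsilon_{i+1}$. The set $\{\boldsymbol\epsilon\boldsymbol\epsilon'\}$ of required flip patterns is the set of subsets of even size of $\mathbb{Z}/n$. Sets of the form $\{i,i+1\}$ generate all of these: a sum of consecutive pairs gives any pair $\{i,m\}$, and every subset of even size is a disjoint union of pairs. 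Concretely, solve $\delta_{i-1}\delta_i=\epsilon_i\epsilon'_i$ for $\delta\in\{\pm1\}^n$ by propagating around the cycle. The system closes consistently exactly because $\prod_i\epsilon_i\epsilon'_i=1$. Set $\Psi^\#=\prod_{\delta_i=-1}\sigma_i$, meaning $\alpha_i\mapsto\delta_i\alpha_i$. Checking crossing $\ell$ directly: the relation $\epsilon_\ell\alpha_j=\alpha_i\triangleright^{\pm}\alpha_k$ becomes $\epsilon_\ell\delta_j\delta_i\,\alpha_j=\alpha_i\triangleright^{\pm}\alpha_k$, which has sign $\epsilon'_\ell$.

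Finally, $\Psi^\#$ is an involutive automorphism of $B^{\otimes n}$ carrying $J_{\boldsymbol\epsilon'}$ onto $J_{\boldsymbol\epsilon}$, so it induces an isomorphism $\Psi:\calQ_{\boldsymbol\epsilon}(D)\to\calQ_{\boldsymbol\epsilon'}(D)$. Because $\Psi^\#$ fixes the image of $A^{\otimes n}$ and $J\subset J_{\boldsymbol\epsilon}$, the compositions with the map $A^{\otimes n}/I\to B^{\otimes n}/J_{\bullet}$ agree, which gives $\rho_{\boldsymbol\epsilon'}\circ\Psi=\rho_{\boldsymbol\epsilon}$. The restriction to $\calQ^\times$ follows because both open subschemes are preimages of $\calX^\times(K)$ under $\pi\circ\rho_\bullet$.
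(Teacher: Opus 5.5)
Your proposal is correct and is essentially the paper's proof. The paper also rescales each arc color by a sign $\eta_k$ defined recursively along the cyclic labeling ($\eta_{k+1}=\eta_k\epsilon_k/\epsilon'_k$), uses $\mathrm{sgn}(\boldsymbol{\epsilon})=\mathrm{sgn}(\boldsymbol{\epsilon}')$ for consistency around the cycle, and checks that each crossing generator is sent to $\pm$ the corresponding generator. Your extra justifications fill in what the paper calls obvious: the substitution fixes $\varphi^{\otimes n}(A^{\otimes n})$, which gives $\rho_{\boldsymbol{\epsilon}'}\circ\Psi=\rho_{\boldsymbol{\epsilon}}$, and the preimage argument handles the restriction to $\calQ^\times$.
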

	
	\begin{proof}
		Label the arcs and crossings of $D$ cyclically along the diagram so that the $k$-th crossing lies between the $k$-th and $(k+1)$-st arcs. Let $i_k$ be the index of the over-arc at the $k$-th crossing (see Figure~\ref{fig:indexalong}). Then, the $\boldsymbol{\epsilon}$-refined arc-coloring relation at the $k$-th crossing is of the form $\epsilon_k\alpha_{k+1} = \alpha_k \pm \langle \alpha_k, \alpha_{i_k} \rangle \alpha_{i_k}$.
        
		We define signs $\eta_k \in\{\pm1\}$ recursively by
		\[
		\eta_1=1, \qquad \eta_{k+1} 	= \eta_k \frac{\epsilon_k}{\epsilon_k'}	\quad	\text{for }	1\leq k\leq n.
		\]
		Since $\mathrm{sgn}(\boldsymbol{\epsilon})	= \mathrm{sgn}(\boldsymbol{\epsilon}')$, we have
		$\eta_{n+1} = \eta_1 \prod_{k=1}^n\frac{\epsilon_k}{\epsilon'_k} = \eta_1$, 
		so this definition is compatible with the cyclic labeling. Consider the involutive automorphism
		\[
		      \widetilde{\Psi}:  \BA^{2n} \rightarrow \BA^{2n}, \qquad  (\alpha_1,\ldots,\alpha_n) \longmapsto (\eta_1 \alpha_1,\ldots,\eta_n \alpha_n).
		\]
		Then for 
        \begin{align*}
            F_k &= 	\epsilon_k \alpha_{k+1}	-	\alpha_k 	\mp 	\langle \alpha_k, \alpha_{i_k} 	\rangle\alpha_{i_k}, & 
            F_k' &=	\epsilon'_k \alpha_{k+1}	-	\alpha_k 	\mp 	\langle \alpha_k, \alpha_{i_k} 	\rangle\alpha_{i_k},
        \end{align*} 
		we have
		  \[
		      \widetilde{\Psi}^\ast(F_k') = \epsilon_k'\eta_{k+1} \alpha_{k+1} - \eta_k \alpha_k \mp \langle \eta_k \alpha_k, \eta_{i_k} \alpha_{i_k} \rangle \eta_{i_k}\alpha_{i_k} = \eta_k F_k.
		\]
		It follows that $\widetilde{\Psi}^\ast (J_{\boldsymbol{\epsilon}'}) = J_{\boldsymbol{\epsilon}}$. Therefore, we obtain the induced isomorphism $\Psi$ as in~\eqref{eqn:PHI}.  Since $\eta_k = \pm1$ for all $k$, it is obvious that 
		$ \rho_{\boldsymbol{\epsilon}'} \circ \Psi = \rho_{\boldsymbol{\epsilon}}$.
	\end{proof}
		
	Propositions~\ref{prop:decomp} and~\ref{prop:Qebijection} imply that not all $\calQ^\times_{\boldsymbol{\epsilon}}(D)$ are needed to cover $\calR^\times(K)$. Indeed, for any two $\boldsymbol{\epsilon},\boldsymbol{\epsilon}' \in\{\pm1\}^n$ with $\mathrm{sgn}(\boldsymbol{\epsilon}) \neq \mathrm{sgn}(\boldsymbol{\epsilon}')$, the morphism
	\begin{equation} \label{eqn.surj}
		\rho_{\boldsymbol{\epsilon}} \sqcup \rho_{\boldsymbol{\epsilon}'} : \calQ^\times_{\boldsymbol{\epsilon}}(D) \sqcup 	\calQ^\times_{\boldsymbol{\epsilon}'}(D) \longrightarrow \calR^\times(K)
	\end{equation}
	is surjective.
	
	\subsection{The obstruction class}\label{sec:obs}
    
	A parabolic $\SL_2(\BC)$-representation $\rho$ satisfies $\Tr(\rho(\mu))=2$ for a meridian $\mu$. Since the canonical longitude $\lambda$ commutes with $\mu$, we have
	\[
	   \Tr(\rho(\lambda)) = \pm 2\,.
	\]
	This sign can be interpreted as the obstruction to $\rho$ being boundary-unipotent, and can be regarded as a class in the second relative cohomology group of the knot exterior with coefficients in $\{\pm 1\}$.  We thus refer to it as the \emph{obstruction class} of $\rho$. 
	
	According to the obstruction class, the  parabolic representation scheme decomposes into two open-and-closed subschemes:
	\[
	   \calR^\times(K) = \calR^\times_+(K) \sqcup \calR^\times_-(K).
	\]
	More precisely, the two subschemes can be defined as follows.
	\begin{lemma}\label{lem:obs} 
        Let $\rho_{\textrm{univ}} : \calG(K) \rightarrow \mathrm{SL}_2(\Gamma (\calR^\times(K)))$ be the tautological representation, and 
		\[
		      \tau_\lambda := \mathrm{Tr} (\rho_{\mathrm{univ}}(\lambda)) \in \Gamma (\calR^\times(K)).
		\]
		Then $(\tau_\lambda-2)(\tau_\lambda+2)=0$. Consequently, the two open-and-closed subschemes $\calR_\pm^\times(K)$ are cut out by the equations $\tau_\lambda = \pm 2$.
	\end{lemma}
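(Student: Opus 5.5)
The plan is to show that $(\tau_\lambda-2)(\tau_\lambda+2)=0$ in $\Gamma(\calR^\times(K))$, working with the tautological matrices rather than at $\BC$-points, so that nilpotents are not lost. I would argue locally, using the cover of $\calR^\times(K)$ coming from the surjective finite morphisms $\rho_{\boldsymbol{\epsilon}}\sqcup\rho_{\boldsymbol{\epsilon}'}$ of~\eqref{eqn.surj}. The identity cannot simply be pulled back from there, because injectivity of the pullback on functions is not clear. Instead, I would cover $\calR^\times(K)$ by affine opens on which the meridian matrix $M=\rho_{\mathrm{univ}}(g_1)$ has a nonvanishing off-diagonal entry. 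The entry $b_1$ or $c_1$ serves here; this is legitimate because if both $b_1$ and $c_1$ vanished at a point, then $(1-a_1)^2=-b_1c_1$ would force $M=I$ there, and the argument of Lemma~\ref{lem:zerocol} would make the representation trivial.

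On the open set $D(b_1)$, set $N=M-I$. Then $N^2=(\Tr M-2)M+ (1-\det M)I=0$ identically, and the first column of $N$ is nonzero up to a unit, so I would produce an explicit $g\in\SL_2(\mathcal O)$ over the coordinate ring conjugating $M$ to $\begin{pmatrix}1&b_1\\0&1\end{pmatrix}$. Concretely, take $g$ with columns $(b_1,\,1-a_1)^T/\sqrt{\cdot}$; alternatively, use the first column of $N$ together with $e_1$, after rescaling by a unit. Let $L=\rho_{\mathrm{univ}}(\lambda)$. Since $\lambda$ commutes with $\mu$, $L$ commutes with $M$, hence with $N$. Commuting with a matrix $\begin{pmatrix}0&b\\0&0\end{pmatrix}$ with $b$ a unit forces $L$ to be upper triangular with equal diagonal entries $t$. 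Then $\det L=t^2=1$, so $\tau_\lambda=2t$ satisfies $\tau_\lambda^2=4$.

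To avoid choosing $g$, I would argue directly. From $LN=NL$ and $N^2=0$ with the rank-one column structure, one can derive $(L-tI)N=0$ and then show that $L$ is a polynomial in $N$, namely $L=tI+sN$. This last step I would verify by the Cayley--Hamilton theorem in the centralizer, which is free of rank $2$ over the base because $N$ has a unit entry. Then $\det L=t^2=1$ and $\Tr L=2t$, giving $\tau_\lambda^2-4=0$ on each chart, and hence globally.

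Finally, since $(\tau_\lambda-2)+(-(\tau_\lambda+2))=-4$ is a unit, the ideals $(\tau_\lambda-2)$ and $(\tau_\lambda+2)$ are coprime with zero product. The Chinese remainder theorem then splits $\Gamma$, chartwise, as a product cut out by $\tau_\lambda=\pm2$, giving the open-and-closed decomposition. The main obstacle I expect is the commutant step, namely proving that the centralizer of $M$ is $\mathcal O[N]$ scheme-theoretically over a possibly non-reduced base. The unit entry of $N$ on each chart is exactly what makes this work.
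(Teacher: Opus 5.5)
Your proposal is correct and is essentially the paper's argument: the paper also works chartwise (on $D(a)$, $D(b)$, $D(c)$ for $M=\begin{pmatrix}1+a&b\\c&1-a\end{pmatrix}$), solves $ML=LM$ on $D(b)$ as $r=cq/b$, $p-s=2aq/b$ --- which is exactly your centralizer statement $L=tI+(q/b)(M-I)$ --- then gets $1=\det L=t^2$ from $a^2+bc=0$, and splits by the coprime ideals $(\tau_\lambda\mp 2)$. The only slip is in your optional conjugation variant: on $D(b_1)$ the column of $N$ with a unit entry is the second one, $(b_1,1-a_1)^T$, not the first, and no square root is needed, since $g=\begin{pmatrix}1&0\\(1-a_1)/b_1&1\end{pmatrix}$ already conjugates $M$ to $\begin{pmatrix}1&b_1\\0&1\end{pmatrix}$.
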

	
	\begin{proof}
		For simplicity,  write
		\[
		      M:=\rho_{\mathrm{univ}}(\mu) =
    		\begin{pmatrix}
    			1+a & b\\
    			c & 1-a
    		\end{pmatrix}, \qquad
    		L:=\rho_{\mathrm{univ}}(\lambda) =
    		\begin{pmatrix}
    			p&q\\
    			r&s
    		\end{pmatrix}.
		\]
		Since $\det(M)=1$, we have $a^2+bc=0$. If $a=b=c=0$ at a point of $\calR^\times(K)$, then the corresponding representation sends $\mu$ to the identity, and thus is trivial, contradicting the definition of $\calR^\times(K)$. Therefore, $D(a)$, $D(b)$, $D(c)$ form an open cover of $\calR^\times(K)$.
		
		Since $\lambda$ commutes with $\mu$, we have $ML=LM$. Comparing matrix entries gives
		\[
		      br=cq, \qquad b(p-s)=2aq, \qquad c(p-s)=2ar.
		\]
		On $D(b)$, we have $r = cq/b$ and $p-s = 2aq/b$. Setting $t= (p+s)/2$, we have
		\[
		      p=t+aq/b, \qquad s=t-aq/b.
		\]
		Since $\det(L)=1$, we have 
		\begin{align*}
			1 =ps-qr =t^2-\frac{a^2q^2}{b^2}-\frac{cq^2}{b}	=t^2.
		\end{align*}
		It follows that $\Tr(L)^2=(p+s)^2=4$  on $D(b)$. Similar arguments work on $D(a)$ and $D(c)$. This proves that $\tau_\lambda^2=4$ on $\calR^\times(K)$.  Moreover, since the ideals $(\tau_\lambda-2)$ and
		$(\tau_\lambda+2)$ are coprime, the asserted open-and-closed decomposition follows.
	\end{proof}

	\begin{theorem} \label{thm:obs}
		Fix $\boldsymbol{\epsilon}\in\{\pm1\}^n$ and let $s=\sgn(\boldsymbol{\epsilon})$ denote its total sign. Then  the morphism $\rho_{\boldsymbol{\epsilon}} : \calQ_{\boldsymbol{\epsilon}}^\times(D) \rightarrow	\calR^\times(K)$ takes values in $\calR_s^\times(K)$. Thus, it induces a morphism
		\begin{equation} \label{eqn:rhoe}
			\rho_{\boldsymbol{\epsilon}} : \calQ_{\boldsymbol{\epsilon}}^\times(D) \longrightarrow \calR_s^\times(K).
		\end{equation}
		By abuse of notation, we denote the induced morphism by the same symbol $\rho_{\boldsymbol{\epsilon}}$.
	\end{theorem}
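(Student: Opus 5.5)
We need to show that $\tau_\lambda\circ\rho_{\boldsymbol{\epsilon}} = 2s$ on $\calQ^\times_{\boldsymbol{\epsilon}}(D)$. By Lemma~\ref{lem:obs} we already know $(\tau_\lambda-2)(\tau_\lambda+2)=0$. So it suffices to exhibit the pulled-back function $\rho_{\boldsymbol{\epsilon}}^\ast\tau_\lambda$ explicitly as $2s$ in the coordinate ring $B^{\otimes n}/J_{\boldsymbol{\epsilon}}$, localized away from the trivial locus. The idea is to compute the longitude's action on the meridian's color vector using the quandle relations themselves, rather than matrices.

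Label arcs and crossings cyclically as in the proof of Proposition~\ref{prop:Qebijection}, so that the $k$-th crossing lies between arcs $k$ and $k+1$, with over-arc $i_k$. Take $\mu=g_1$. Along the knot, the Wirtinger presentation writes the longitude as
\[
\lambda = g_1^{-w}\, g_{i_n}^{\sigma_n}\cdots g_{i_1}^{\sigma_1},
\]
with $\sigma_k=\pm1$ the crossing signs and $w=\sum\sigma_k$ the writhe. The order and conventions are to be matched with \eqref{eqn.wirtinger}. The sign-refined relation at crossing $k$ reads
\[
\epsilon_k\alpha_{k+1} = [\alpha_{i_k}]^{-\sigma_k}\alpha_k,
\]
up to the convention of which exponent occurs. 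Iterating around the diagram gives
\[
\Big(\prod_k \epsilon_k\Big)\alpha_1 = \alpha_{n+1} = [\alpha_{i_n}]^{\mp\sigma_n}\cdots[\alpha_{i_1}]^{\mp\sigma_1}\alpha_1 .
\]
Since $[\alpha_1]\alpha_1=\alpha_1$, multiplying by the appropriate power of $[\alpha_1]$ yields
\[
\rho_{\boldsymbol{\epsilon}}(\lambda')\,\alpha_1 = s\,\alpha_1
\]
in $(B^{\otimes n}/J_{\boldsymbol{\epsilon}})^2$. Here $\lambda'$ is $\lambda$ or $\lambda^{-1}$; the two have the same trace.

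Next we extract the trace from this eigenvector identity. Set $L=\rho_{\boldsymbol{\epsilon}}(\lambda)$. Then $L$ commutes with $M=[\alpha_1]$ and satisfies $L\alpha_1=s\alpha_1$. We work locally on $D(x_1)$ and $D(y_1)$, which cover $\calQ^\times_{\boldsymbol{\epsilon}}(D)$ by Lemma~\ref{lem:zerocol}. On $D(x_1)$, choose
\[
g=\begin{pmatrix} x_1 & 0\\ y_1 & x_1^{-1}\end{pmatrix}\in\SL_2,
\qquad\text{so that}\qquad g\binom{1}{0}=\alpha_1 .
\]
Then $g^{-1}Mg$ is the matrix $[\binom{1}{0}]=\begin{pmatrix}1&1\\0&1\end{pmatrix}$. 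Hence $L'=g^{-1}Lg$ commutes with this unipotent matrix. It is therefore upper triangular with equal diagonal entries $t$. From $L'\binom10=s\binom10$ we get $t=s$, so $\Tr L = 2t = 2s$. The chart $D(y_1)$ is handled symmetrically with $g=\begin{pmatrix} 0 & -y_1^{-1}\\ y_1 & x_1\end{pmatrix}$.

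Alternatively, one can avoid charts. Write $\alpha_1^\perp$ for a vector with $\langle\alpha_1,\alpha_1^\perp\rangle=1$, which exists locally. Then $L\alpha_1^\perp = s\alpha_1^\perp + c\,\alpha_1$ by commutation, so the trace is $2s$ directly. Either way we obtain $\tau_\lambda - 2s = 0$ in the coordinate ring. The image of $\rho_{\boldsymbol{\epsilon}}$ therefore lies in the closed subscheme $\tau_\lambda=2s$, which is $\calR^\times_s(K)$.

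The main obstacle is the bookkeeping in the iteration step. We must check that the product of over-arc matrices around the diagram, with the correct exponents and the correction by $g_1^{-w}$, is exactly the preferred longitude. The exponent convention in the sign-refined relation ($\triangleright$ versus $\triangleright^{-1}$ against crossing sign) must also match the Wirtinger word. I would fix this by checking carefully against \eqref{eqn.wirtinger} at one positive and one negative crossing. Any off-by-inverse error is harmless, since $\Tr(L)=\Tr(L^{-1})$. Extra factors of $[\alpha_1]$ are also harmless, since $\alpha_1$ is fixed by $[\alpha_1]$.
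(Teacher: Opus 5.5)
Your proposal is correct and follows essentially the paper's route: iterate the sign-refined relations around the cyclically labeled diagram to get $\rho(\lambda^\circ)\alpha_1=s\,\alpha_1$, then on the charts $D(x_1)$, $D(y_1)$ conjugate by an $\SL_2$-matrix whose first column is $\alpha_1$ and read off the trace. The paper fixes the second diagonal entry by $\det=1$ and applies the framing correction after conjugating, whereas you use commutation with the unipotent image of the meridian and absorb the power of $g_1$ via $[\alpha_1]\alpha_1=\alpha_1$ beforehand; both work, though your matrix on $D(y_1)$ does not have first column $\alpha_1$ and should be $\begin{pmatrix}x_1&-y_1^{-1}\\ y_1&0\end{pmatrix}$.
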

	\begin{proof}
		Let $ \rho_\calQ : \calG(K) \rightarrow \SL_2(\Gamma(Q_{\boldsymbol{\epsilon}}^\times(D)))$ be the pullback of the tautological representation $\rho_\mathrm{univ}$ in Lemma~\ref{lem:obs} along $\rho_{\boldsymbol{\epsilon}}:\calQ_{\boldsymbol{\epsilon}}^\times(D) \rightarrow \calR^\times(K)$. In what follows, we suppress $K$ and $D$ from the notation for simplicity.
		
		\begin{figure}[!h]
			\centering
			\scalebox{1}{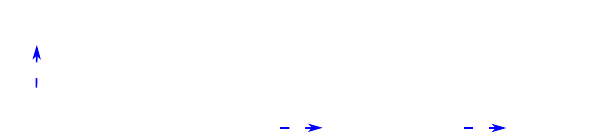}
			\caption{Arc indices along the diagram.}
			\label{fig:indexalong}
		\end{figure}
		
		Label the arcs and crossings cyclically along the diagram so that the $k$-th crossing lies between the $k$-th and $(k+1)$-st arcs, and let $i_k$ be the index of the over-arc at the $k$-th crossing. We denote the arc-color of the $k$-th arc on $\calQ_{\boldsymbol{\epsilon}}^\times$ by
		\[
		\alpha_k= 
        \begin{pmatrix}
			x_k\\
			y_k
		\end{pmatrix} \in
		\Gamma(\calQ_{\boldsymbol{\epsilon}}^\times) \oplus \Gamma(\calQ_{\boldsymbol{\epsilon}}^\times).
		\]
		with $\alpha_{n+1}=\alpha_1$. If $\sigma_k\in\{\pm1\}$ denotes the sign of the $k$-th crossing, then the defining equation of $\calQ_{\boldsymbol{\epsilon}}$ at the $k$-th crossing is written as
		\[
		      \epsilon_k\alpha_{k+1} = [\alpha_{i_k}]^{-\sigma_k}\alpha_k.
		\]
		Iterating these equations along the diagram, we obtain
		\[
		      s \, \alpha_1 = \epsilon_1 \cdots \epsilon_n  \alpha_1 = [\alpha_{i_n}]^{-\sigma_n}\cdots [\alpha_{i_1}]^{-\sigma_1}\alpha_1.
		\]
		
		On the other hand, if we choose $g_1$ as a meridian, then the blackboard-framed longitude is
		$\lambda^\circ = g_{i_1}^{\sigma_1}\cdots g_{i_n}^{\sigma_n}$. See Figure~\ref{fig:indexalong}. It follows that  $\rho_Q(\lambda^\circ) = [\alpha_{i_1}]^{\sigma_1}\cdots [\alpha_{i_n}]^{\sigma_n}$ and thus
		\begin{equation} \label{eqn.signdp}
			\rho_Q(\lambda^\circ)\alpha_1= s\, \alpha_1 \,.
		\end{equation}
		As no arc-color vanishes at any point of $\calQ_{\boldsymbol{\epsilon}}^\times$ (see Lemma~\ref{lem:zerocol}), we have $\calQ_{\boldsymbol{\epsilon}}^\times=D(x_1)\cup D(y_1)$. On $D(x_1)$, consider the matrix
		\[
		P=
		\begin{pmatrix}
			x_1&0\\
			y_1&x_1^{-1}
		\end{pmatrix}
		\in\SL_2(\Gamma(D(x_1))),
		\]
		whose first column is $\alpha_1$. With respect to this basis, Equation~\eqref{eqn.signdp} implies that
		\[
		P^{-1} \rho_Q(\lambda^\circ) P =
		\begin{pmatrix}
			s&*\\
			0&d
		\end{pmatrix}
		\]
		for some $d$, which should equal to $s$ by the fact that the determinant is 1. This proves that the trace of  $\rho_Q(\lambda^\circ)$ is equal to $2s$ on $D(x_1)$. We prove the same equality on $D(y_1)$ similarly by using
		\[
		\begin{pmatrix}
			x_1&-y_1^{-1}\\
			y_1&0
		\end{pmatrix}
		\in\SL_2 (\Gamma(D(y_1))).
		\]
		Since these two open subsets cover $\calQ_{\boldsymbol{\epsilon}}^\times$, the equality holds in $\Gamma(\calQ_{\boldsymbol{\epsilon}}^\times)$.
		
		The canonical longitude differs from the blackboard-framed longitude by a power of the meridian $g_1$. Using the conjugation by the same matrix $P$, one easily checks that the trace of $\rho_Q(\lambda)$ is also $2s$.
		This proves that $\rho_{\boldsymbol{\epsilon}} : \calQ_{\boldsymbol{\epsilon}}^\times \rightarrow \calR^\times$
		takes values in $\calR^\times_s$, cut out by $\tau_\lambda =2s$.
	\end{proof}
	
	\begin{theorem} \label{prop.mu2torsor}
		The morphism $	\rho_{\boldsymbol{\epsilon}}:\calQ_{\boldsymbol{\epsilon}}^\times(D) \rightarrow
		\calR_s^\times(K)$ is a surjective $\mu_2$-torsor.
	\end{theorem}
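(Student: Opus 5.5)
The plan is to let $\mu_2$ act on $\calQ_{\boldsymbol{\epsilon}}(D)$ by the global sign change $\delta\cdot(\alpha_1,\ldots,\alpha_n)=(\delta\alpha_1,\ldots,\delta\alpha_n)$, and then verify three things: the action is well defined, $\rho_{\boldsymbol{\epsilon}}$ is surjective onto $\calR_s^\times(K)$, and the action map $\mu_2\times\calQ_{\boldsymbol{\epsilon}}^\times\to\calQ_{\boldsymbol{\epsilon}}^\times\times_{\calR_s^\times}\calQ_{\boldsymbol{\epsilon}}^\times$ is an isomorphism. Together with flatness and finiteness of $\rho_{\boldsymbol{\epsilon}}$ (inherited from Lemma~\ref{lem.torsor} by base change), this gives the torsor property.

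\textbf{Well-definedness and invariance.} Each defining relation $\epsilon_\ell\alpha_j-\alpha_i\mp\langle\alpha_i,\alpha_k\rangle\alpha_k$ has homogeneous components of degrees $1$ and $3$ only. It is therefore sent to its negative under $\alpha\mapsto-\alpha$, so $J_{\boldsymbol{\epsilon}}$ is preserved. Since $[-\alpha]=[\alpha]$, we get $\rho_{\boldsymbol{\epsilon}}\circ\delta=\rho_{\boldsymbol{\epsilon}}$. The action also preserves $\calQ^\times_{\boldsymbol{\epsilon}}$.

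\textbf{Surjectivity.} Take $\mathfrak p\in\calR_s^\times(K)$. By the surjectivity of $\rho$ restricted to $\calQ^\times(D)$ and Proposition~\ref{prop:decomp}, $\mathfrak p$ lies in the image of some $\calQ^\times_{\boldsymbol{\epsilon}'}(D)$. By Theorem~\ref{thm:obs}, $\mathrm{sgn}(\boldsymbol{\epsilon}')=s$, since $\calR^\times_+$ and $\calR^\times_-$ are disjoint. Proposition~\ref{prop:Qebijection} then transports the preimage into $\calQ^\times_{\boldsymbol{\epsilon}}(D)$, compatibly with $\rho$.

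\textbf{Torsor structure.} Over $\calR^\times(K)$ every matrix $\rho(g_i)$ avoids the identity; this follows from the argument of Lemma~\ref{lem:zerocol}, since one trivial generator forces the trivial representation. Hence $\calQ^\times(D)=\calR^\times(K)\times_{(\Spec A\setminus\mathfrak m)^n}(\Spec B\setminus 0)^n$, which by Lemma~\ref{lem.torsor} is a $\mu_2^n$-torsor over $\calR^\times(K)$, acting by independent signs $(\eta_k)$ on the arcs. In particular $\rho$ is finite étale, and so is its restriction $\rho_{\boldsymbol{\epsilon}}$ to the open-and-closed piece $\calQ_{\boldsymbol{\epsilon}}^\times$. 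For the fiber product, I would argue as follows. A $T$-point of $\calQ^\times_{\boldsymbol{\epsilon}}\times_{\calR}\calQ^\times_{\boldsymbol{\epsilon}}$ is a pair of $\boldsymbol{\epsilon}$-colorings $\boldsymbol\alpha,\boldsymbol\beta$ with $[\alpha_k]=[\beta_k]$. The $\mu_2^n$-torsor structure gives unique $e_k\in\mu_2(T)$ with $\beta_k=e_k\alpha_k$. Substituting into the relation at the $k$-th crossing, exactly as in the proof of Proposition~\ref{prop:Qebijection}, gives $\epsilon_k e_{k+1}\alpha_{k+1}=e_k\epsilon_k\alpha_{k+1}$, so $(e_{k+1}-e_k)\alpha_{k+1}=0$. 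Because $\alpha_{k+1}$ is nowhere vanishing (Lemma~\ref{lem:zerocol}) and $e_{k+1}-e_k\in\{0,\pm2\}$ locally, this forces $e_{k+1}=e_k$. Going around the diagram, all $e_k$ equal a single $\delta\in\mu_2(T)$. This yields the inverse of the action map, so the shear map is an isomorphism.

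\textbf{Main obstacle.} The step I expect to be most delicate is making the cancellation $(e_{k+1}-e_k)\alpha_{k+1}=0\Rightarrow e_{k+1}=e_k$ scheme-theoretic rather than pointwise. To handle it I would work on the open cover $D(x_{k+1})\cup D(y_{k+1})$, where one coordinate is a unit. Alternatively, I would write $e_{k+1}e_k^{-1}$ as an idempotent decomposition of $T$ and use that $2$ is invertible.
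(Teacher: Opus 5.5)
Your proposal is correct and follows essentially the same route as the paper. Surjectivity comes from Theorem~\ref{thm:obs}, Proposition~\ref{prop:decomp} and Proposition~\ref{prop:Qebijection}. Finite \'etaleness comes from pulling back the $\mu_2^n$-torsor of Lemma~\ref{lem.torsor} and restricting to the open-and-closed piece. The shear map $(\delta,\boldsymbol{\alpha})\mapsto(\boldsymbol{\alpha},\delta\boldsymbol{\alpha})$ is inverted by showing the arc-wise signs agree across each crossing, using Lemma~\ref{lem:zerocol}.

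Three of your steps make explicit what the paper leaves implicit: the check that $J_{\boldsymbol{\epsilon}}$ is stable under the sign change because the relations have odd degree, the derivation of the $e_k$ from the $\mu_2^n$-torsor structure, and the scheme-theoretic cancellation on $D(x_{k+1})\cup D(y_{k+1})$.
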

	\begin{proof}
		The surjectivity is clear from Theorem~\ref{thm:obs} together with the fact that the morphism in~\eqref{eqn.surj} is surjective. In what follows, we suppress $K$ and $D$ from the notation for simplicity. 

        By Lemma~\ref{lem.torsor}, the morphism
		\[
		      (\Spec(B)\setminus\{(x,y)\})^n \longrightarrow (\Spec(A)\setminus\{\mathfrak m\})^n
		\]
		is a $\mu_2^n$-torsor. As its pullback along $\calR^\times \rightarrow (\Spec(A)\setminus\{\mathfrak m\})^n$ is $\rho:\calQ^\times \rightarrow \calR^\times$, $\rho$ is finite étale. Since
		$\calQ_{\boldsymbol{\epsilon}}^\times$ is an open-and-closed subscheme of $\calQ^\times$ (see Proposition~\ref{prop:decomp}), the morphism $\rho_{\boldsymbol{\epsilon}}$ in~\eqref{eqn:rhoe}
		is also finite étale. Together with its surjectivity, $\rho_{\boldsymbol{\epsilon}}$ is an \'{e}tale cover.
		
		We now consider the pullback of $\rho_{\boldsymbol{\epsilon}}$ along itself and claim that the morphism
		\begin{equation} \label{eqn.trivialization}
			\mu_2\times \calQ_{\boldsymbol{\epsilon}}^\times
			\longrightarrow
			\calQ_{\boldsymbol{\epsilon}}^\times \times_{\calR_s^\times}\calQ_{\boldsymbol{\epsilon}}^\times, \qquad 
            (\delta,\boldsymbol{\alpha}) \mapsto (\boldsymbol{\alpha},\delta\boldsymbol{\alpha})
		\end{equation}
		is an isomorphism. Let $\boldsymbol{\alpha}$ and $\boldsymbol{\alpha}'$ be the arc colors pulled back from the first and second factors of $\calQ_{\boldsymbol{\epsilon}}^\times \times_{\calR_s^\times}
		\calQ_{\boldsymbol{\epsilon}}^\times$, respectively. Since the two arc-colorings have the same
		image in $\calR_s^\times$, we have $[\alpha_i']=[\alpha_i]$ for all $i$. It follows that for each $i$, there exists a unique regular function
		\[
		\delta_i\in\Gamma \left( \calQ_{\boldsymbol{\epsilon}}^\times \times_{\calR_s^\times} \calQ_{\boldsymbol{\epsilon}}^\times \right)
		\]
		such that $\delta_i^2=1$ and $\alpha_i'=\delta_i\alpha_i$.
		At a crossing whose two under-arcs are colored by $\alpha_i$ and $\alpha_j$ and whose over-arc is colored by $\alpha_k$,  the $\boldsymbol{\epsilon}$-refined arc-coloring relations for $\boldsymbol{\alpha}$ and $\boldsymbol{\alpha}'$ are written as
		\[
		\epsilon_\ell\alpha_j =	\alpha_i\triangleright^{\pm1}\alpha_k \qquad \text{and}	\qquad
		\epsilon_\ell\alpha_j' =	\alpha_i'\triangleright^{\pm1}\alpha_k'.
		\]
		Substituting $\alpha_i'=\delta_i\alpha_i$, we obtain
		\begin{align*}
			\epsilon_\ell\delta_j\alpha_j =	(\delta_i\alpha_i)	\triangleright^{\pm1} (\delta_k\alpha_k) 
            = \delta_i (\alpha_i\triangleright^{\pm1}\alpha_k) = \epsilon_\ell\delta_i\alpha_j.
		\end{align*}
		Since $\alpha_j$ is non-zero by Lemma~\ref{lem:zerocol}, we have $\delta_j = \delta_i$. Along the diagram, we deduce that all the $\delta_i$ coincide; denote their common function by $\delta$. It follows that the morphism defined by $(\boldsymbol{\alpha}, \boldsymbol{\alpha}') \mapsto (\delta, \boldsymbol{\alpha})$ is the inverse of the morphism in~\eqref{eqn.trivialization}. Therefore, after pulling back along the étale
		cover $	\rho_{\boldsymbol{\epsilon}}$, it becomes the trivial $\mu_2$-bundle. Hence $\rho_{\boldsymbol{\epsilon}}$ is a $\mu_2$-torsor.
	\end{proof}

	\subsection{Open charts and normalization}
	
	We now consider the parabolic character scheme. As in Section~\ref{sec:obs}, $\calX^\times(K)$ decomposes into two open-and-closed subschemes according to the obstruction class:
	\[
	   \calX^\times(K) = \calX^\times_+(K) \sqcup \calX^\times_-(K).
	\]
    More precisely, the proof of Lemma~\ref{lem:obs} implies that the trace function $t_\lambda$ of the canonical longitude $\lambda$ satisfies $(t_\lambda-2)(t_\lambda+2)=0$ on $\calX^\times(K)$, and $\calX^\times_{\pm}(K)$ are cut out by $t_\lambda = \pm2$. For each sign $s$, the canonical surjection $\pi$
	restricts to a surjective morphism
	$$\pi:\calR_s^\times(K) \rightarrow\calX_s^\times(K)$$
	which we denote by the same symbol $\pi$.
	
	We fix an ordering of the arcs of the diagram $D$, and  let $g_k$ be the Wirtinger generator of $\calG(K)$ associated to the $k$-th arc. Put
	\[
	d_k := t_{g_{1}g_{k}^{-1}}-2 \in \Gamma (\calX(K)).
	\]
	Here $t_g$ denote the trace function of $g\in\calG(K)$.
	
	\begin{lemma}
		\label{lem:xdeocmp}
		Fix a sign $s$, and let $\calX^\times_{s,k}(K) := D(d_k)\cap\calX^\times_s(K)$ for $k=2,\ldots,n$.
		Then
		\[
		\calX^\times_s(K) =	\bigcup_{k=2}^n \calX^\times_{s,k}(K).
		\]
	\end{lemma}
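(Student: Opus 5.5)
We need to show that the functions $d_2,\dots,d_n$ have no common zero on $\calX^\times_s(K)$. Equivalently, at every point $\chi\in\calX^\times_s(K)$ some $d_k$ is a unit in the local ring, i.e.\ does not vanish in the residue field. The plan is to lift such a point to the quandle scheme. By Theorem~\ref{prop.mu2torsor} and the surjectivity of $\pi$, every point $\chi$ of $\calX^\times_s(K)$ is the image of a point $\mathfrak p\in\calQ^\times_{\boldsymbol{\epsilon}}(D)$ with $\sgn(\boldsymbol{\epsilon})=s$. Over $\mathfrak p$ we have arc colors $\alpha_k(\mathfrak p)\in\kappa(\mathfrak p)^2$, all nonzero by Lemma~\ref{lem:zerocol}. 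Since $d_k$ is a regular function on $\calX$, its pullback to $\mathfrak p$ can be computed from the representation $\rho_{\mathfrak p}$.

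The key computation is a formula for the trace of $[\alpha]\,[\beta]^{-1}$ in terms of the bilinear form. A direct calculation with the matrices in~\eqref{eqn:m} should give
\[
  \Tr\bigl([\alpha][\beta]^{-1}\bigr) - 2 = \langle \alpha,\beta\rangle^2 .
\]
The cleanest check is to conjugate by $\SL_2$ so that $\alpha=\binom{x}{0}$, noting that the form is invariant. It follows that $d_k$ pulls back to $\langle\alpha_1,\alpha_k\rangle^2$ on $\calQ(D)$. Hence, if all $d_k$ vanish at $\chi$, then $\langle\alpha_1,\alpha_k\rangle=0$ in $\kappa(\mathfrak p)$ for every $k$, since the residue field is reduced. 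So every $\alpha_k(\mathfrak p)$ is a scalar multiple of the nonzero vector $\alpha_1(\mathfrak p)$.

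To reach a contradiction, observe that if all colors are proportional to $\alpha_1$, then all $[\alpha_k]$ are unipotent matrices fixing the line spanned by $\alpha_1$. The representation $\rho_{\mathfrak p}$ is therefore reducible with image in a unipotent abelian group. Because the knot group abelianizes to $\mathbb Z$ with all meridians conjugate, the matrices $[\alpha_k]$ must coincide. Concretely, the quandle relation reads $\epsilon_\ell\alpha_j=\alpha_i\pm\langle\alpha_i,\alpha_k\rangle\alpha_k=\alpha_i$, so $\alpha_j=\pm\alpha_i$ and every $[\alpha_k]=[\alpha_1]$. The representation then factors through the abelianization, and its character equals that of the trivial representation: all traces are $2$, since the image is unipotent upper triangular in a suitable basis. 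Thus $\chi=\chi_{\mathrm{triv}}$, contradicting $\chi\in\calX^\times(K)$.

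The main obstacle is passing from pointwise vanishing to a statement about non-closed points and the GIT quotient. We must make sure that a point of $\calX^\times_s$ lifts to $\calQ^\times_{\boldsymbol\epsilon}$; this follows from surjectivity of $\pi$ and $\rho_{\boldsymbol\epsilon}$. We must also justify that the character of $\rho_{\mathfrak p}$ being trivial means $\chi=\chi_{\mathrm{triv}}$ as a scheme point. For the latter, the trace functions $t_g$ generate the invariant ring, so $\chi$ is determined by the values $t_g(\chi)\in\kappa(\chi)$, which all equal $2$. One may also reduce to closed $\BC$-points, since the complement of $\bigcup_k D(d_k)$ is closed and a nonempty closed subset of a finite-type $\BC$-scheme contains a closed point.
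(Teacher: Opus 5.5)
Your argument is correct, but it takes a different route from the paper.

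The paper works only at a closed point $\chi$ of the complement. It chooses a parabolic representation $\rho$ over $\BC$ with character $\chi$ and uses the criterion that $\Tr(XY^{-1})=2$ if and only if $XY=YX$ for trace-$2$ matrices. This shows that every $\rho(g_k)$ commutes with $\rho(g_1)$, so $\rho$ is reducible. The paper then invokes the Burde--de Rham theorem to conclude that $\rho$ is abelian, hence $\chi$ is trivial.

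You instead lift $\chi$ to a possibly non-closed point $\mathfrak p$ of $\calQ^\times_{\boldsymbol{\epsilon}}(D)$, using the surjectivity in Theorem~\ref{prop.mu2torsor}. You then use the identity $\Tr([\alpha][\beta]^{-1})-2=\langle\alpha,\beta\rangle^2$, which the paper only brings in afterwards for Lemma~\ref{lem:pullbackchart}. Vanishing of all $d_k$ in $\kappa(\mathfrak p)$ forces every color to be a multiple of the nonzero vector $\alpha_1(\mathfrak p)$. The image of $\rho_{\mathfrak p}$ therefore lies in the abelian unipotent subgroup fixing the line spanned by $\alpha_1(\mathfrak p)$, and all traces equal $2$.

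What your route buys: the passage from ``all $d_k$ vanish'' to ``trivial character'' is completely elementary, with no Burde--de Rham. (The paper's own commutation step would also suffice for this, since the centralizer of a nontrivial unipotent is abelian.) Your argument also works at every scheme point, so the reduction to closed points becomes optional.

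What it costs: you lean on two heavier inputs. One is the surjectivity of $\pi\circ\rho_{\boldsymbol{\epsilon}}$. The other is the standard fact that trace functions generate the invariant ring, which you need to identify a possibly non-closed $\chi$ with $\chi_{\mathrm{triv}}$. The paper needs only the existence of a $\BC$-representation over a closed point.
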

	
	\begin{proof}
		Suppose that the union does not cover $\calX^\times_s(K)$. Its complement is then a non-empty closed subset, and hence contains a closed point $\chi$. We choose a parabolic representation $\rho : \calG(K) \rightarrow\SL_2(\BC)$ whose character is $\chi$. Since $\chi\notin D(d_k)$, we have 
        $ \Tr ( \rho(g_{1})\rho(g_{k})^{-1}) = 2$ for all $k$. Observe that for trace-$2$ matrices $X,Y\in\SL_2(\BC)$,
		\[
		\Tr(XY^{-1})=2 \Longleftrightarrow XY =YX.
		\]
		It follows that every $\rho(g_{k})$ commutes with $\rho(g_{1})$. Hence $\rho$ is reducible. Since $\rho$ is both parabolic and reducible, the Burde--de Rham theorem implies that $\rho$ is abelian. Hence its character $\chi$ is trivial, contradicting $\chi\in\calX^\times_s(K)$.
	\end{proof}
	Pulling back the open cover of $\calX_s^\times(K)$ along the surjective morphism $\pi\circ\rho_{\boldsymbol{\epsilon}}$ for any $\boldsymbol{\epsilon}$ satisfying $\sgn(\boldsymbol{\epsilon})=s$, we obtain an open cover of $\calQ_{\boldsymbol{\epsilon}}^\times(D)$.  Precisely, put
    \begin{align*}
	\calQ^\times_{\boldsymbol{\epsilon},k}(D) &:= (\pi\circ\rho_{\boldsymbol{\epsilon}})^{-1} (\calX_{s,k}^\times(K))
	\subset \calQ_{\boldsymbol{\epsilon}}^\times(D).
    \end{align*}
	\begin{lemma} \label{lem:pullbackchart}
		For $k=2,\ldots,n$, one has
		\begin{align*}
		\calQ^\times_{{\boldsymbol{\epsilon}},k}(D) &= D(\Delta_k),
		\end{align*}
		where $ \Delta_k:=\langle\alpha_{1},\alpha_{k}\rangle \in \Gamma (\calQ_{\boldsymbol{\epsilon}}^\times(D))$.
	\end{lemma}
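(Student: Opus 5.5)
The plan is to compute the pullback of $d_k$ to $\calQ_{\boldsymbol{\epsilon}}^\times(D)$ explicitly and show that it equals $\pm\Delta_k^2$, up to a sign or unit. Since $D(f^2)=D(f)$ and $D(-f)=D(f)$, this gives the claim immediately. More precisely, $(\pi\circ\rho_{\boldsymbol{\epsilon}})^{-1}(D(d_k))$ is $D\bigl((\pi\circ\rho_{\boldsymbol{\epsilon}})^\sharp d_k\bigr)$ inside $\calQ_{\boldsymbol{\epsilon}}^\times(D)$. Intersecting with the preimage of $\calX_s^\times(K)$ costs nothing: by Theorem~\ref{thm:obs}, $\pi\circ\rho_{\boldsymbol{\epsilon}}$ already lands in $\calX_s^\times(K)$. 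So it suffices to identify the pullback of $d_k$.

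The pullback of the trace function $t_{g_1g_k^{-1}}$ along $\pi\circ\rho_{\boldsymbol{\epsilon}}$ is $\Tr([\alpha_1][\alpha_k]^{-1})$, where $\alpha_1,\alpha_k$ are the tautological arc colors. This holds because trace functions on $\calX(K)$ pull back to trace functions of the tautological representation on $\calR(K)$, and $\rho_{\boldsymbol{\epsilon}}$ sends $g_i\mapsto[\alpha_i]$. The key step is the polynomial identity
\[
\Tr\bigl([\alpha][\beta]^{-1}\bigr)=2+\langle\alpha,\beta\rangle^2 \qquad\text{for }\alpha,\beta\in B\oplus B \text{ (any commutative ring)},
\]
which I would verify directly. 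Write $[\alpha]=I+N_\alpha$ with $N_\alpha=\alpha\,\alpha^{T}J$ (up to a sign convention), where $J=\begin{pmatrix}0&1\\-1&0\end{pmatrix}$. Then $N_\alpha$ is a rank-one nilpotent and $[\beta]^{-1}=I-N_\beta$. Expanding gives
\[
\Tr\bigl((I+N_\alpha)(I-N_\beta)\bigr)=2-\Tr(N_\alpha N_\beta),
\]
since $\Tr N_\alpha=\Tr N_\beta=0$. For rank-one matrices, $\Tr(N_\alpha N_\beta)$ is a product of two bilinear pairings: $(\alpha^TJ\beta)(\beta^TJ\alpha)=-\langle\alpha,\beta\rangle^2$, up to orientation. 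To be safe I would double-check the sign by plugging in $\alpha=\binom10$, $\beta=\binom01$. In that case $[\alpha]=\begin{pmatrix}1&1\\0&1\end{pmatrix}$ and $[\beta]^{-1}=\begin{pmatrix}1&0\\1&1\end{pmatrix}$, so the trace is $3$. Hence $d_k$ pulls back to $\langle\alpha_1,\alpha_k\rangle^2=\Delta_k^2$, which is consistent with the sign found above.

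It follows that $(\pi\circ\rho_{\boldsymbol{\epsilon}})^{-1}(\calX^\times_{s,k}(K))=D(\Delta_k^2)=D(\Delta_k)$, and the lemma is proved. The main obstacle is bookkeeping rather than substance. One needs the identification of pulled-back trace functions; this is a compatibility statement for the GIT quotient map $\pi$, namely that $t_g$ is by definition the invariant function $\Tr\rho_{\mathrm{univ}}(g)$. One also needs the sign check in the trace identity. Neither involves the sign data $\boldsymbol{\epsilon}$, so the argument is uniform in $\boldsymbol{\epsilon}$.
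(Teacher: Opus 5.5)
Your proposal is correct and follows the paper's proof. Both rest on the identity $\Tr\bigl([\alpha][\beta]^{-1}\bigr)-2=\langle\alpha,\beta\rangle^2$, so $d_k$ pulls back to $\Delta_k^2$ and the preimage of $D(d_k)$ is $D(\Delta_k^2)=D(\Delta_k)$. Your rank-one nilpotent argument with $N_\alpha=\alpha\alpha^{T}J$, with the sign confirmed at $\alpha=\binom{1}{0}$, $\beta=\binom{0}{1}$, simply spells out the ``direct computation'' that the paper leaves implicit.
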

	
	\begin{proof}
		A direct computation gives
		$$
		      \Tr\bigl([\alpha][\beta]^{-1}\bigr)-2 = \langle\alpha,\beta\rangle^2
		$$
		for any arc colors $\alpha,\beta$. Applying this identity to $\alpha_{1}$ and $\alpha_{k}$, the pullback of $d_k$ along $\pi \circ \rho_{\boldsymbol{\epsilon}}$ is equal to $\Delta_k^2$.
		Therefore, the pullback of $D(d_k)$ is equal to $D(\Delta_k^2)$ and thus to $D(\Delta_k)$.
	\end{proof}
	
	For $k=2,\ldots,n$, we define the \emph{$k$-th normalized scheme} by
	$$
	\calN_{\boldsymbol{\epsilon},k}(D):= \Spec\left(\frac{\BC[x_1,y_1,\ldots,x_n,y_n]} {J_{\boldsymbol{\epsilon}} +(x_{1}-1,y_{1},x_{k})}\right)_{y_{k}}.
	$$
	On $\calN_{\boldsymbol{\epsilon},k}(D)$, one has $\alpha_{1} = \binom{1}{0}$ and $\alpha_{k} = \binom{0}{y_{k}}$.
    Theorem~\ref{thm:normalizedchart} below shows that, for each $k=2,\ldots,n$, the normalized scheme $\calN_{\boldsymbol{\epsilon},k}(D)$ is isomorphic to the open chart $\calX^\times_{s,k}(K)$ of $\calN_{\boldsymbol{\epsilon},k}(D)$. Thus, it suffices to compute $\calN_{\boldsymbol{\epsilon},k}(D)$ for $k=2,\ldots,n$ in order to determine the scheme structure of $\calX^\times_s(K)$.

	\begin{theorem} \label{thm:normalizedchart}
		For $k=2,\ldots,n$, 
		$$
		\Theta_k: \SL_2 \times\calN_{\boldsymbol{\epsilon},k}(D)\longrightarrow \calQ^\times_{{\boldsymbol{\epsilon}},k}(D),\qquad
		(g,\boldsymbol{\alpha})\longmapsto g\boldsymbol{\alpha},
		$$
		is an isomorphism.  
		In addition, it induces an isomorphism
		$$
		\calN_{\boldsymbol{\epsilon},k}(D) \overset{\sim}{\longrightarrow} \calX_{s,k}^\times(K)\,.
		$$
	\end{theorem}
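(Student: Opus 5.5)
The plan is to write down an explicit inverse of $\Theta_k$. Then we pass to the quotient using the fact that $\Theta_k$ is an $\SL_2$-equivariant trivialization.

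First, $\Theta_k$ is well defined. For $g\in\SL_2$ and a normalized coloring $\boldsymbol{\alpha}$, the tuple $g\boldsymbol{\alpha}$ is again an $\boldsymbol{\epsilon}$-refined coloring. This holds because $\langle g\alpha,g\beta\rangle=\langle\alpha,\beta\rangle$, so $g(\alpha\triangleright^{\pm1}\beta)=g\alpha\triangleright^{\pm1}g\beta$ and $g$ preserves each ideal $J_{\ell,\epsilon_\ell}$. Moreover $\Delta_k(g\boldsymbol{\alpha})=\langle\binom10,\binom0{y_k}\rangle=y_k$, which is a unit on $\calN_{\boldsymbol{\epsilon},k}$. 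So the image lies in $D(\Delta_k)$, which equals $\calQ^\times_{\boldsymbol{\epsilon},k}$ by Lemma~\ref{lem:pullbackchart}.

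For the inverse, on $D(\Delta_k)$ take the matrix
\[
h:=\bigl(\alpha_1 \;\; \Delta_k^{-1}\alpha_k\bigr)\in\SL_2\bigl(\Gamma(D(\Delta_k))\bigr),
\]
whose columns are $\alpha_1$ and $\Delta_k^{-1}\alpha_k$. Its determinant is $\Delta_k^{-1}\langle\alpha_1,\alpha_k\rangle=1$. Then $h^{-1}\alpha_1=\binom10$ and $h^{-1}\alpha_k=\binom{0}{\Delta_k}$. So $h^{-1}\boldsymbol{\alpha}$ is a normalized coloring with $y_k=\Delta_k$ invertible, and it satisfies the $\boldsymbol{\epsilon}$-relations by equivariance. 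Define
\[
\boldsymbol{\alpha}\mapsto (h,\,h^{-1}\boldsymbol{\alpha}).
\]
Checking that this is a two-sided inverse reduces to one uniqueness statement. If $g\boldsymbol{\beta}$ has $\boldsymbol{\beta}$ normalized, then the matrix $h$ built from $g\boldsymbol{\beta}$ is $g\cdot(\beta_1\;\;y_k^{-1}\beta_k)=g\cdot I=g$. This shows the stabilizer is trivial and the normalized representative is unique, scheme-theoretically rather than just on points. All formulas are polynomial in the coordinates and $\Delta_k^{-1}$, so both maps are morphisms of schemes. This gives the first claim.

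For the second claim, $\calQ^\times_{\boldsymbol{\epsilon},k}\cong\SL_2\times\calN_{\boldsymbol{\epsilon},k}$ equivariantly, with $\SL_2$ acting by left multiplication on the first factor. Hence
\[
\Gamma(\calQ^\times_{\boldsymbol{\epsilon},k})^{\SL_2}=\Gamma(\calN_{\boldsymbol{\epsilon},k}),
\]
since $\BC[\SL_2]^{\SL_2}=\BC$ and the invariants of $\BC[\SL_2]\otimes R$ are $R$. It remains to relate this to $\calX^\times_{s,k}$, and this is the main obstacle: passing through the $\mu_2$-torsor $\rho_{\boldsymbol{\epsilon}}$ and the GIT quotient $\pi$.

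Step (a). The action of $-1\in\mu_2$ on $\calQ_{\boldsymbol{\epsilon}}$ agrees with the action of $-I\in\SL_2$. So $\mu_2$-invariants inside $\SL_2$-invariants cost nothing, and by Theorem~\ref{prop.mu2torsor},
\[
\Gamma(\calQ^\times_{\boldsymbol{\epsilon},k})^{\SL_2}=\Gamma\bigl(\rho_{\boldsymbol{\epsilon}}^{-1}\pi^{-1}\calX^\times_{s,k}\bigr)^{\SL_2}=\Gamma\bigl(\pi^{-1}\calX^\times_{s,k}\bigr)^{\SL_2}.
\]
Here we use that a $\mu_2$-torsor is a quotient, $\calO_{\calR}=(\rho_*\calO_{\calQ})^{\mu_2}$.

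Step (b). Taking $\SL_2$-invariants commutes with localization at the invariant $d_k$ and with restriction to the open-and-closed piece $t_\lambda=2s$. Hence $\Gamma(\pi^{-1}\calX^\times_{s,k})^{\SL_2}$ is the ring of $\calX^\times_{s,k}=D(d_k)\cap\calX_s^\times$.

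Combining (a) and (b), we conclude that $\pi\circ\rho_{\boldsymbol{\epsilon}}\circ\Theta_k$ restricted to $\{I\}\times\calN_{\boldsymbol{\epsilon},k}$ is an isomorphism onto $\calX^\times_{s,k}(K)$.
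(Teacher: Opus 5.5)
Your proposal is correct and follows essentially the same route as the paper. Your matrix $h=(\alpha_1\;\;\Delta_k^{-1}\alpha_k)$ is exactly the inverse of the paper's $F(\boldsymbol{\alpha})$, so your inverse $\boldsymbol{\alpha}\mapsto(h,h^{-1}\boldsymbol{\alpha})$ coincides with theirs, and the passage to $\calX^\times_{s,k}(K)$ uses the same chain ($\mu_2$-torsor invariants, $\mu_2=\{\pm I\}$ central in $\SL_2$, and $\SL_2$-invariants of $\SL_2\times\calN_{\boldsymbol{\epsilon},k}(D)$), with your step (b) merely making explicit the compatibility of invariants with localizing at $d_k$ and cutting out $t_\lambda=2s$, which the paper uses without comment.
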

	
	\begin{proof}
		Let	$\boldsymbol{\alpha}\in \calQ^\times_{{\boldsymbol{\epsilon}},k}(D)$ with $
		\alpha_{1}	= \binom{x_1}{y_1}$, $\alpha_{k} =\binom{x_{k}}{y_{k}}$.
		Since $\Delta_k$ is invertible on $\calQ^\times_{{\boldsymbol{\epsilon}},k}(D)$,
		$$
		F(\boldsymbol{\alpha}) 	=
		\begin{pmatrix}
			y_{k}\Delta_k^{-1}	& 	-x_{k}\Delta_k^{-1} \\
			-y_{1}		& 		x_{1}
		\end{pmatrix}
		$$
		defines a regular $\SL_2$-valued function $F$ on $\calQ^\times_{{\boldsymbol{\epsilon}},k}(D)$.
		Then we have
		$$
		F(\boldsymbol{\alpha})\alpha_{1} = \binom{1}{0},	\qquad
		F(\boldsymbol{\alpha})\alpha_{k} = \binom{0}{\Delta_k}.
		$$
		It follows that	$F(\boldsymbol{\alpha})\boldsymbol{\alpha} \in	\calN_{\boldsymbol{\epsilon},k}(D)$. The inverse of $\Theta_k$ is then  given by
		$$
		\boldsymbol{\alpha}	\longmapsto	(F(\boldsymbol{\alpha})^{-1}, \, F(\boldsymbol{\alpha})\boldsymbol{\alpha}).
		$$
		Indeed, if $g\in\SL_2$ satisfies $g\alpha_{1} = \binom{1}{0}$ and $g\alpha_{k} = \binom{0}{c}$ for some $c$,
		then 
		$$
		c =	\det \left(g\alpha_{1},	g\alpha_{k}	\right) = \det \left(\alpha_{1}, \alpha_{k} \right) = \Delta_k.
		$$
		Since $\alpha_1$ and $\alpha_k$ form a basis of
		$\mathcal O({\calQ^\times_{{\boldsymbol{\epsilon}},k}(D)})^{\oplus2 }$,  these conditions determine $g$ uniquely. This proves that $\Theta_k$ is an isomorphism.

		By Proposition~\ref{prop.mu2torsor}, the restriction of $\rho_{\boldsymbol{\epsilon}}: \calQ^\times_{\boldsymbol{\epsilon}}(D) \rightarrow \calR^\times_{s}(K)$ to $\calQ^\times_{{\boldsymbol{\epsilon}},k}(D)$
		$$
		\rho_{\boldsymbol{\epsilon}}: 	\calQ^\times_{{\boldsymbol{\epsilon}},k}(D)\longrightarrow	\calR^\times_{s,k}(K)
		$$
		is a $\mu_2$-torsor. Therefore,
		$\Gamma (\calR^\times_{s,k}(K)) \cong \Gamma (\calQ^\times_{{\boldsymbol{\epsilon}},k}(D))^{\mu_2}$.
		On the other hand,  $\mu_2$ is the center of $\SL_2$ and acts trivially on
		$\calR^\times_{s,k}(K)$. Hence, the $\SL_2$-action factors through
		$\PSL_2=\SL_2/\mu_2$. It follows that
		\[
		\Gamma (\calX_{s,k}^\times(K)) \cong
		\Gamma (\calR^\times_{s,k}(K))^{\PSL_2}	\cong 
        (\Gamma (\calQ^\times_{{\boldsymbol{\epsilon}},k}(D))^{\mu_2})^{\PSL_2} \cong 
        \Gamma (\calQ^\times_{{\boldsymbol{\epsilon}},k}(D))^{\SL_2}.
		\]
		Under the isomorphism $\Theta_k$, the $\SL_2$-action on $\SL_2 \times\calN_{\boldsymbol{\epsilon},k}(D)$ is given by the left multiplication on the first factor. It follows that
		$$
		\Gamma (\calQ^\times_{{\boldsymbol{\epsilon}},k}(D))^{\SL_2} \cong
        \Gamma(\SL_2 \times\calN_{\boldsymbol{\epsilon},k}(D))^{\SL_2} \cong
		\Gamma (\calN_{\boldsymbol{\epsilon},k}(D)).
		$$
		Therefore, we obtain $\calN_{\boldsymbol{\epsilon},k}(D) \cong \calX_{s,k}^\times(K)$.
	\end{proof}
    
    If we restrict our attention to $\BC$-points, there is a more convenient way than computing all the normalized schemes $\calN_{\boldsymbol{\epsilon},k}(D)$. Theorem~\ref{thm:normalizedchart} gives a set-bijection
    $$
    \calQ^\times_{{\boldsymbol{\epsilon},k}}(D)(\BC)/ \SL_2(\BC) \overset{1:1}{\longleftrightarrow} \calX^\times_{s,k}(K)(\BC)
    $$
    for each $k=2,\ldots,n$. Since these subsets cover $\calQ^\times_{{\boldsymbol{\epsilon}}}(D)(\BC)$ and $\calX^\times_{s}(K)(\BC)$, respectively, we obtain a set-bijection
    $$
    \calQ^\times_{{\boldsymbol{\epsilon}}}(D)/ \SL_2(\BC) \overset{1:1}{\longleftrightarrow}  \calX^\times_{s}(K)(\BC).
    $$
    Since a $\BC$-point of $\calX^\times_{s}(K)$ is a non-trivial parabolic character, we call a $\BC$-point of $\calQ^\times_{\boldsymbol{\epsilon}}(D)$ a \emph{non-trivial} $\boldsymbol{\epsilon}$-refined arc-coloring. Two such colorings $\boldsymbol{\alpha}$ and $\boldsymbol{\alpha}'$ are said to be \emph{equivalent} if there exists $g\in\SL_2(\BC)$ such that $g\boldsymbol{\alpha}=\boldsymbol{\alpha}'$; equivalently, if they have the same image in $\calX^\times_{s}(K)(\BC)$ under $\pi\circ\rho_{\boldsymbol{\epsilon}}$.

	\begin{corollary} \label{cor:normalizedrep}
		Each equivalence class of non-trivial $\boldsymbol{\epsilon}$-refined arc-colorings has a unique representative of the form
		$$
		\alpha_{1} = \binom{1}{0}, \quad \alpha_{2} = \binom{\ast}{0},	\quad
		\ldots,	\quad \alpha_{{k-1}} =\binom{\ast}{0}, \quad\alpha_{k} =\binom{0}{\ast}.
		$$
		Here $k$ is the smallest index such that $\Delta_k\neq 0$, and each $\ast$ is a non-zero complex number.
	\end{corollary}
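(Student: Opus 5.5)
Fix a non-trivial $\boldsymbol{\epsilon}$-refined arc-coloring $\boldsymbol{\alpha}$ and let $k$ be the smallest index with $\Delta_k=\langle\alpha_1,\alpha_k\rangle\neq0$. Such a $k\in\{2,\ldots,n\}$ exists, because otherwise $\boldsymbol{\alpha}$ would not lie in any chart $\calQ^\times_{\boldsymbol{\epsilon},k}(D)=D(\Delta_k)$. These charts cover $\calQ^\times_{\boldsymbol{\epsilon}}(D)$ by Lemma~\ref{lem:xdeocmp} and Lemma~\ref{lem:pullbackchart}. The index $k$ is an invariant of the equivalence class, since $\langle g\alpha,g\beta\rangle=\langle\alpha,\beta\rangle$ for $g\in\SL_2(\BC)$. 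So it makes sense to speak of the $k$ attached to a class.

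For existence I will use the matrix $F(\boldsymbol{\alpha})$ from the proof of Theorem~\ref{thm:normalizedchart}. It sends $\alpha_1\mapsto\binom{1}{0}$ and $\alpha_k\mapsto\binom{0}{\Delta_k}$. Put $\boldsymbol{\alpha}'=F(\boldsymbol{\alpha})\boldsymbol{\alpha}$. For $2\le j\le k-1$ we have $\langle\alpha'_1,\alpha'_j\rangle=\Delta_j=0$. Writing $\alpha'_j=\binom{x}{y}$, this pairing is $\det\begin{pmatrix}1&x\\0&y\end{pmatrix}=y$, so $y=0$. By Lemma~\ref{lem:zerocol}, $\alpha'_j\neq0$, hence $x\neq0$. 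Also $\Delta_k\neq0$, so $\alpha'$ has exactly the stated shape, with every $\ast$ non-zero.

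For uniqueness, suppose $\boldsymbol{\beta}$ and $g\boldsymbol{\beta}$ are both of the stated form, with the same $k$ by invariance. Then $g\binom{1}{0}=\binom{1}{0}$, so $g=\begin{pmatrix}1&b\\0&1\end{pmatrix}$. We also have $g\binom{0}{c}=\binom{bc}{c}$, and this must have first entry $0$. Since $c\neq0$, this forces $b=0$, so $g=I$ and the representative is unique. Equivalently, this is the injectivity of $\Theta_k$ in Theorem~\ref{thm:normalizedchart} restricted to $\BC$-points.

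No step is a real obstacle. The only point needing care is that the minimal index $k$ is well defined on the class, which is what allows the charts to be combined without overlap issues, together with the use of Lemma~\ref{lem:zerocol} to make each $\ast$ non-zero.
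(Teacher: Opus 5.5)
Your proof is correct and follows essentially the same route as the paper: the chart cover (Lemmas~\ref{lem:xdeocmp} and~\ref{lem:pullbackchart}) gives a minimal $k$, the matrix $F(\boldsymbol{\alpha})$ from Theorem~\ref{thm:normalizedchart} normalizes the coloring, Lemma~\ref{lem:zerocol} makes every $\ast$ non-zero, and uniqueness comes from $\alpha_1,\alpha_k$ forming a basis, which leaves a trivial stabilizer. Your explicit stabilizer computation and your observation that $k$ is an $\SL_2(\BC)$-invariant of the class spell out what the paper leaves implicit when it cites Theorem~\ref{thm:normalizedchart}.
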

	
	\begin{proof}
		Let $\boldsymbol{\alpha}$ be a non-trivial $\boldsymbol{\epsilon}$-refined arc-coloring. 
        By Lemma~\ref{lem:pullbackchart}, there exists an index $k$ such that $\Delta_k\neq 0$. 
        Choose the smallest such index. By Theorem~\ref{thm:normalizedchart}, the $\SL_2(\BC)$-orbit of $\boldsymbol{\alpha}$ contains a unique	representative satisfying
		$$
		\alpha_{1} 	=\binom{1}{0}, \qquad \alpha_{k} = \binom{0}{\Delta_k}.
		$$
		For each $2 \leq j<k$, the minimality of $k$ gives
        $\Delta_j =	\langle\alpha_{1},\alpha_{j}\rangle	= 0$, that is, $\alpha_{j}$ is proportional to $\alpha_{1}$.
		Since $\alpha_{j}$ cannot be zero, it has the form $\binom{\ast}{0}$ with a non-zero first entry. 
        The uniqueness follows from Theorem~\ref{thm:normalizedchart}. 
	\end{proof}

    \begin{remark}
		The condition that $k$ be the smallest index such that $\Delta_k\neq 0$ is useful for partitioning the $\BC$-points into non-overlapping cases. It should be distinguished from the open cover in Lemma~\ref{lem:xdeocmp}.
		Indeed, over the open chart $\calX^\times_{k,\boldsymbol{\epsilon}}$, the equation $d_k=0$ pulls back to	$\Delta_k^2=0$, not to $\Delta_k=0$. Therefore, to compute scheme-theoretic information, such as the multiplicity, one must work on the open charts.
    \end{remark}
	
	\section{Examples}\label{sec.example}
	
	In this section, we illustrate how the quandle method can be applied in practice for computing parabolic characters through several examples.
	
	\subsection{The knot $4_1$}\label{sec:4_1}
	
    We begin with $K=4_1$ as a toy example. Consider the diagram of $K$ whose arcs and crossings are labeled as in Figure~\ref{fig:4_1fig}. We may assume that all entries of $\boldsymbol{\epsilon}$ are $+1$, except possibly the one appearing in the final equation. More precisely, as we solve the equations successively, we set each $\epsilon_i$ appearing before the final equation equal to $+1$.

	\begin{figure}[!ht]
		\begin{center}
			\includegraphics[width=0.28\textwidth]{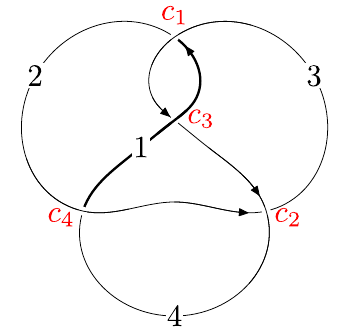}
		\end{center}
		\caption{A knot diagram of $4_1$.}
		\label{fig:4_1fig}
	\end{figure}
	
	We choose an ordering of the arcs as $1 \rightarrow 3 \rightarrow 2 \rightarrow 4$. Then, by Corollary~\ref{cor:normalizedrep}, we first consider the case where
	\begin{equation}
		\alpha_1=\binom{1}{0}, \ \alpha_3=\binom{0}{u}.
	\end{equation}
     At the crossings $c_1$ and $c_4$, we determine the other arc colors $\alpha_2$ and $\alpha_4$ as follows.
	\begin{align*}
		c_1 :& \quad \alpha_2=\alpha_1-\langle \alpha_1, \alpha_3 \rangle \alpha_3=\binom{1}{-u^2}, \\
		c_4 :& \quad \alpha_4=\alpha_1-\langle \alpha_1, \alpha_2 \rangle \alpha_2=\binom{1+u^2}{-u^4},    
	\end{align*}
	At the  crossing $c_2$, we have 
	$$c_2 : \quad \alpha_3 = \alpha_2 + \langle \alpha_2, \alpha_4 \rangle  \alpha_4$$
	which forces $u$ to satisfy $u^2+u+1=0$. The final equation, at the crossing $c_3$, is
    $$   c_3:\quad \pm\alpha_4 = \alpha_3 - \langle\alpha_3,\alpha_1\rangle\alpha_1.$$
    This equation holds for the minus sign, but not for the plus sign. We therefore obtain two nontrivial parabolic characters with negative obstruction class.
	
	The next case to consider, by Corollary~\ref{cor:normalizedrep}, is
	\begin{equation} \label{eqn.case}
		\alpha_1=\binom{1}{0}, \ \alpha_3=\binom{\ast}{0}, \ \alpha_2=\binom{0}{\ast}\,.
	\end{equation}
	However, a similar computation shows that, since $\alpha_3$ is proportional to $\alpha_1$, no nontrivial arc-coloring exists, and hence no nontrivial parabolic character is obtained. This concludes that
    \begin{equation}
		\calX^\times_+ (4_1)= \emptyset
		\quad \textrm{and} \quad
		\calX^\times_-(4_1) \simeq \Spec  \left(\BC[u^{\pm1}]/(u^2+u+1)\right).
	\end{equation}
    The two parabolic characters lie in the open chart $\langle \alpha_1,\alpha_3\rangle\neq 0$, and have multiplicity $1$.
    
	Applying the obtained arc-colorings to the formulas for cusp shapes and complex volumes in \cite{kim_octahedral_2018,kim_octahedral_2019}, we obtain Table~\ref{table:fig8_complex_volume}.
	
	\begin{table}[!h]
		\centering
		\renewcommand{\arraystretch}{1.2}
		\begin{tabular}{|r|r|r|}
			\hline
			 \text{Parameter} & \text{Complex volume} & \text{Cusp shape} \\
			\hline
			 $u =-0.5-0.86603\II$ & $2.02988\II$ & $-3.46410\II$ \\
			 $u = -0.5+0.86603\II$ & $-2.02988\II$ & $3.46410\II$ \\
			\hline
		\end{tabular}
		\caption{Complex volumes and cusp shapes of parabolic representations of the figure-eight knot.}
		\label{table:fig8_complex_volume}
	\end{table}

	\subsection{The knot $8_{18}$}\label{sec:8_18}
	Let us consider the knot $K=8_{18}$ with its diagram as in Figure~\ref{fig:8_18fig}. As explained, we may assume that every $\epsilon_i$ appearing before the final equation is $+1$.
	
	\begin{figure}[!h]
		\begin{center}
			\includegraphics[width=0.33\textwidth]{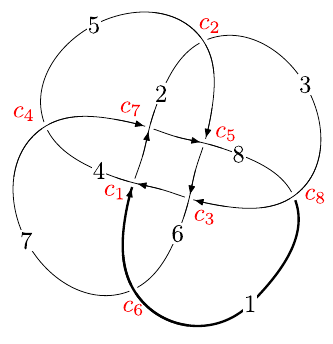}
		\end{center}
		\caption{A knot diagram of $8_{18}$.}\label{fig:8_18fig}
	\end{figure}
	
	We choose an ordering of the arcs as $1 \rightarrow 4 \rightarrow 6 \rightarrow \cdots$. By  Cororally~\ref{cor:normalizedrep}, the first case to consider is
	\begin{equation} \label{eqn.case2}
		\alpha_1=\binom{1}{0},\ \alpha_4 =\binom{0}{u}\,.
	\end{equation}
	Unlike in the previous section, $\alpha_1$ and $\alpha_4$ do not determine all the arc colors. For this reason, we introduce additional variables and set $\alpha_6=\binom{x}{y}$. Then, at the crossings $c_1, c_6, c_7, c_4, c_2$, we determine all the arc colors as follows.
	\begin{align*}
		c_1 : &\quad \alpha_2=\alpha_1-\lrbar{ \alpha_1, \alpha_4 } \alpha_4=\binom{ 1 }{ -u^2 },\\ 
		c_6 : & \quad \alpha_7=\alpha_6+\lrbar {\alpha_6, \alpha_1} \alpha_1=\binom{ x-y }{ y },\\ 
		c_7:& \quad \alpha_8=\alpha_7-\lrbar {\alpha_7, \alpha_2} \alpha_2=\binom{x + x u^2 - y u^2 }{ y - y u^2 - x u^4 + y u^4 },\\
		c_4 :& \quad \alpha_5=\alpha_4+\lrbar {\alpha_4, \alpha_7} \alpha_7=\binom{ -x^2 u + 2 x y u - y^2 u }{ u - x y u + y^2 u },\\
		c_2:&\quad \alpha_3=\alpha_2+\lrbar {\alpha_2, \alpha_5} \alpha_5=\begin{pmatrix*}
			\begin{aligned}
				1 - x^2 u^2 + 2 x y u^2 + x^3 y u^2 - y^2 u^2 - 3 x^2 y^2 u^2 + 
				3 x y^3 u^2 \\- y^4 u^2 + x^4 u^4 - 4 x^3 y u^4 + 6 x^2 y^2 u^4 - 
				4 x y^3 u^4 + y^4 u^4
			\end{aligned} \\[2.6ex]
			\begin{aligned}
				-2 x y u^2 + 2 y^2 u^2 + x^2 y^2 u^2 - 
				2 x y^3 u^2 + y^4 u^2 - x^2 u^4\\ + 2 x y u^4 + x^3 y u^4 - y^2 u^4 - 
				3 x^2 y^2 u^4 + 3 x y^3 u^4 - y^4 u^4
			\end{aligned}
		\end{pmatrix*}.
	\end{align*}
    At the crossings $c_3$ and $c_5$, we have
    \begin{align*}
		c_3 :& \quad \alpha_4 = \alpha_3 - \langle \alpha_3, \alpha_6\rangle \alpha_6, \\
		c_5 :& \quad \alpha_6 = \alpha_5 - \langle \alpha_5, \alpha_8\rangle \alpha_8 \, .
	\end{align*}
    These equations yield four polynomials $h_1,h_2,h_3,h_4\in\mathbb{Q}[u,x,y]$ that the variables $u,x,y$ should satisfy. Whenever $h_1=\cdots=h_4=0$, the last equation at $c_8$ 
    \begin{align*}
		c_8 : & \quad \pm \alpha_1 =\alpha_8+ \langle \alpha_8, \alpha_3 \rangle \alpha_3\,.
	\end{align*}
    is satisfied up to sign, where the sign is the obstruction class.
    
	Applying the primary decomposition algorithm to the ideal generated by $h_1$, $h_2$, $h_3$, $h_4$, the ideal decomposes into 9 components with the simpler generating sets listed in Table~\ref{table:8_18}. For instance, the first component $\calX^1$ of $\calX^\times(8_{18})$ is given by
    $$ \calX^1 \simeq \Spec \left( \BC[u^{\pm1},x,y]/(1+u+u^2,\,1+y+u,\,1+x) \right)$$
	and contains two $\BC$-points, hence two nontrivial parabolic characters with negative obstruction class.
    
    By Corollary~\ref{cor:normalizedrep}, the next case is
	\begin{equation} \label{eqn.case3}
		\alpha_1=\binom{1}{0}, \
		\alpha_4=\binom{u}{0}, \
		\alpha_6=\binom{0}{y}.
	\end{equation}
	A similar computation yields a single solution $(u,y)=(1,1)$ with negative obstruction class. In the remaining cases, both $\alpha_4$ and $\alpha_6$ are proportional to $\alpha_1$, which forces all arc colors to be proportional to $\alpha_1$. Hence, there are no further nontrivial arc-colorings.

    In conclusion, as listed in Table~\ref{table:8_18}, we have
    $$\calX^\times_{+}(8_{18})(\BC) = \{ \text{2 points} \} ,\qquad \calX^\times_{-}(8_{18})(\BC)=\{ \text{24 points} \} \,.$$ 
    The last point obtained in Case~\eqref{eqn.case3} lies in the open chart $\langle \alpha_1,\alpha_6\rangle\neq 0$, while all the other points lie in the open chart $\langle \alpha_1,\alpha_4\rangle\neq 0$. Computing their multiplicities on these charts, we find that every point has multiplicity $1$.

	\begin{table}[!h]
		\centering
		\renewcommand{\arraystretch}{1.3}
		\begin{tabular}{|c|c|c|p{0.52\textwidth}|c|}
			\hline
			\text{Case} & \text{Component} & \text{Obs.} & \text{Generators} &  \text{$\#$ points} \\
			\hline
			\eqref{eqn.case2}
			& $\calX^1$ & $+$ 
			& $1+u+u^2,\ 1+y+u,\ 1+x$
			& $2$ \\
			\cline{2-5}
			
			& $\calX^2$ & $-$ 
			& $-1+u,\ -1+y,\ x $
			& $1$ \\
			\cline{2-5}
			
			& $\calX^3$ & $-$ 
			& $1+u,\ y,\ 1+x$
			& $1$ \\
			\cline{2-5}
			
			& $\calX^4$ & $-$ 
			& $1+u,\ 1+y,\ 1+x$
			& $1$ \\
			\cline{2-5}
			
			& $\calX^5_{\mathrm{geom}}$ & $-$
			& $
			\begin{aligned}
				& -1+2u+u^2-2u^3+u^4,\\[-0.8ex]
				& -1+y+u^2-u^3,\ 1+x
			\end{aligned}
			$
			& $4$ \\
			\cline{2-5}
			
			& $\calX^6$ & $-$ 
			& $
			\begin{aligned}
				& 1+u+u^2, \\[-0.8ex]
				& -1-y+y^2-u-yu,\ x-u+yu
			\end{aligned}
			$
			& $4$ \\
			\cline{2-5}
			
			& $\calX^7$ & $-$ 
			& $
			\begin{aligned}
				& 3-6u+6u^2-3u^3+u^4, \\[-0.8ex]
				& -4+y+5u-3u^2+u^3,\ -3+3x-u^3
			\end{aligned}
			$
			& $4$ \\
			\cline{2-5}
			
			& $\calX^8$ & $-$ 
			& $
			\begin{aligned}
				& 1-2u+u^3+u^4, \\[-0.8ex]
				& -2+y+u+2u^2+u^3,\ x-2u-2u^2-u^3
			\end{aligned}$
			& $4$ \\
			\cline{2-5}
			
			& $\calX^{9}$ & $-$ 
			& $
			\begin{aligned}
				& 1-2u+u^3+u^4, \\[-0.8ex]
				& -2+y+2u+3u^2+2u^3,\ 1+x
			\end{aligned}
			$
			& $4$ \\
			\hline
			\eqref{eqn.case3}
			& $\calX^{10}$ & $-$ 
			& $-1+u,\ -1+y$
			& $1$ \\
			\hline
		\end{tabular}
		\caption{The $\mathbb{Q}$-components of $\calX^\times(8_{18})$.}
		\label{table:8_18}
	\end{table}

	Applying the complex volume formula to these $26$ parabolic characters, we obtain:
	\[
	\begin{gathered}
		\pm 1.71901,\quad
		\pm 12.3509\II,\quad
		1.64493 \pm 4.05977\II,\quad
		-1.64493 \pm 4.05977\II,\\
		\pm 4.05977\II,\quad
		\pm 1.64493\,.
	\end{gathered}
	\]
	Here the list records only distinct values; some of these values occur for more than one parabolic character. The maximum volume, that is, the largest imaginary part among these values, is $12.3509$. This agrees with the hyperbolic volume of the knot $8_{18}$ and is attained on the fifth component; see Table~\ref{table:8_18vol}.
	
	
	\begin{table}[!h]
		\centering
		\renewcommand{\arraystretch}{1.2}
		\begin{tabular}{|l|c|r|r|}
			\hline
			  \text{Parameter} & \text{Obs.} & \text{Complex volume} & \text{Cusp shape} \\
			\hline
			  $u=-0.88320$ & $-$ & $1.71901$ & $5.40877$ \\
			  $u=0.46899$ &  & $-1.71901$ & $-5.40877$ \\
			  $u=1.20711-0.97831\II$ &  & $-12.3509\II$ & $7.82655\II$ \\
			  $u=1.20711+0.97831\II$ &  & $12.3509\II$ & $-7.82655\II$ \\
			\hline
		\end{tabular}
		\caption{Complex volumes and cusp shapes for $\calX^5$.}
		\label{table:8_18vol}
	\end{table}

	\subsection{The knot $11n_{104}$}\label{sec:11n_104}
	
	Let us consider the knot $K=11n_{104}$ with the diagram shown in Figure~\ref{fig:11n_104fig}. It is one of simple examples for which a parabolic character occurs with multiplicity greater than one.

	\begin{figure}[!h]
		\begin{center}
			\includegraphics[width=0.35\textwidth]{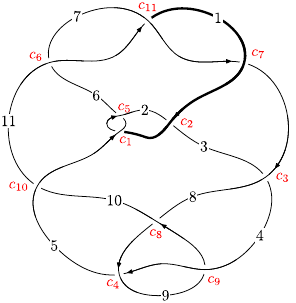}
		\end{center}
		\caption{A knot diagram of $11n_{104}$.}\label{fig:11n_104fig}
	\end{figure}
	
	We choose an ordering of the arcs as $1 \rightarrow 5 \rightarrow 7 \rightarrow \cdots$.  By Cororally~\ref{cor:normalizedrep}, the first case to consider is
	$$
	\alpha_1=\binom{1}{0}, \quad \alpha_5=\binom{0}{u}.
	$$
	As in the previous section, we set $\alpha_7=\binom{x}{y}$ and determine ther other arc colors at the crossings $c_1$, $c_2$, $c_7$, $c_{11}$, $c_6$, $c_{10}$, $c_8$ and $c_4$.
	We omit the explicit expressions for the colors, as some of them are too lengthy. They are recorded in the Mathematica notebook accompanying the arXiv version of this paper and are also available at~\cite{diagramsite}.

	At the crossings $c_3$ and $c_5$,
	$$
	\begin{aligned}
		c_3 :& \quad \alpha_4 = \alpha_3 + \langle \alpha_3, \alpha_8\rangle \alpha_8, \\
		c_5 :& \quad \alpha_6 = \alpha_5 + \langle \alpha_5, \alpha_2\rangle \alpha_2 \, ,
	\end{aligned}
	$$
	we obtain four polynomials $h_1,h_2,h_3,h_4 \in \mathbb{Q}[u,x,y]$. Applying the primary decomposition algorithm to the ideal generated by $h_1,\ldots,h_4$, the ideal decomposes into three components with the simpler generating sets listed in Table~\ref{table:11n_104}. Whenever $h_1=\cdots=h_4=0$, the final equation at $c_9$,
	$$
	c_9:\quad \pm \alpha_{10}=\alpha_9+\langle \alpha_9,\alpha_4\rangle \alpha_4,
	$$
    holds up to sign, and the sign in the left-hand side determines the obstruction class. The plus sign occurs on the components $\calX^1$ and $\calX^2$, while the minus sign occurs on $\calX^3$.
	
	In the remaining cases, $\alpha_5$ is proportional to $\alpha_1$, and this  forces all arc colors to be proportional to $\alpha_1$. Hence, there are no further nontrivial arc-colorings.
	
	\begin{table}[!h]
		\centering
		\renewcommand{\arraystretch}{1.3}
		\small
		\begin{tabularx}{\textwidth}{|c|c|c|>{\raggedright\arraybackslash}X|c|}
			\hline
			\text{Comp.} & \text{Obs.} & \text{Mult.}
			& \text{Generators} & \text{$\#$ points} \\
			\hline
			
			$\calX^1$ & $+$ & $3$
			& $1+y,\quad (1+x)^3,\quad -1+u$
			& $1$ \\
			\hline
			
			$\calX^2$ & $+$ & $1$
			& $-1+y,\quad 1+4x^2-4x^3+x^4,\quad 1+u$
			& $4$ \\
			\hline
			
			$\calX^3_{\mathrm{geom}}$ & $-$ & $1$
			&
			$\begin{aligned}
				&1+8x+14u-u^2+28u^3-9u^4-2u^5+u^6,\qquad y-u,\\
				&-1+2u+2u^2-14u^3+8u^4 +30u^5-10u^6-2u^7+u^8
			\end{aligned}
			$
			& $8$ \\
			\hline
		\end{tabularx}
		\caption{The $\mathbb{Q}$-components of $\calX^\times(11n_{104})$.}
		\label{table:11n_104}
	\end{table}
	
	The first component $\calX^1$  consists of a single point of multiplicity three. Thus, counted with multiplicity, $\calX^\times_+(11n_{104})$ and $\calX^\times_-(11n_{104})$ consist of $7$ and $8$ $\BC$-points, respectively. This agrees with the conjecture of B\'{e}nard and Detcherry:
	$$
	8-7=\frac{\det(11n_{104})-1}{2} = \frac{3-1}{2}.
	$$
	Obviously, the equation fails if we ignore the multiplicity.

	\section{Computations for knots with at most 12 crossings}\label{sec.bd}
	

    Using the quandle method described in Section~\ref{sec.example}, together with computer calculations, the third author and Philip Choi computed complete lists of nontrivial parabolic characters for all $2977$ knots with at most $12$ crossings, including their multiplicities and obstruction classes. The resulting data are available at~\cite{diagramsite}. We summarize the results below.

	Of the $2977$ knots with at most $12$ crossings, exactly $93$ have a positive-dimensional component in their parabolic character schemes. These are
	{\small
		\setlength{\tabcolsep}{4pt}
		\begin{longtable}{*{8}{l}}
			$10_{98}$ & $10_{99}$ & $10_{123}$ & $11a_{43}$ & $11a_{44}$ & $11a_{47}$ & $11a_{57}$ & $11a_{132}$ \\
			$11a_{157}$ & $11a_{231}$ & $11a_{263}$ & $11n_{71}$ & $11n_{72}$ & $11n_{73}$ & $11n_{74}$ & $11n_{75}$ \\
			$11n_{76}$ & $11n_{77}$ & $11n_{78}$ & $11n_{81}$ & $12a_{29}$ & $12a_{30}$ & $12a_{33}$ & $12a_{36}$ \\
			$12a_{113}$ & $12a_{114}$ & $12a_{116}$ & $12a_{117}$ & $12a_{119}$ & $12a_{122}$ & $12a_{157}$ & $12a_{164}$ \\
			$12a_{166}$ & $12a_{167}$ & $12a_{182}$ & $12a_{195}$ & $12a_{348}$ & $12a_{427}$ & $12a_{435}$ & $12a_{554}$ \\
			$12a_{623}$ & $12a_{647}$ & $12a_{668}$ & $12a_{692}$ & $12a_{693}$ & $12a_{694}$ & $12a_{701}$ & $12a_{750}$ \\
			$12a_{801}$ & $12a_{923}$ & $12a_{924}$ & $12a_{981}$ & $12a_{982}$ & $12a_{987}$ & $12a_{990}$ & $12a_{1019}$ \\
			$12a_{1105}$ & $12a_{1202}$ & $12a_{1225}$ & $12a_{1288}$ & $12n_{55}$ & $12n_{56}$ & $12n_{57}$ & $12n_{58}$ \\
			$12n_{59}$ & $12n_{60}$ & $12n_{61}$ & $12n_{62}$ & $12n_{63}$ & $12n_{64}$ & $12n_{66}$ & $12n_{67}$ \\
			$12n_{219}$ & $12n_{220}$ & $12n_{221}$ & $12n_{222}$ & $12n_{223}$ & $12n_{224}$ & $12n_{225}$ & $12n_{229}$ \\
			$12n_{261}$ & $12n_{440}$ & $12n_{508}$ & $12n_{518}$ & $12n_{553}$ & $12n_{554}$ & $12n_{555}$ & $12n_{556}$ \\
			$12n_{604}$ & $12n_{605}$ & $12n_{642}$ & $12n_{706}$ & $12n_{888}$ & & &
		\end{longtable}
	}
	\noindent
	For the remaining $2884$ knots, the parabolic character scheme is finite. We verified that
	\begin{itemize}
		\item all $1958$ small knots satisfy the B\'{e}nard--Detcherry conjecture; and
		\item of the $926$ large knots with finite parabolic character variety, all but ten satisfy the conjecture.
	\end{itemize}
	Thus Equation~\eqref{conj.bd} holds for $2874$ of the $2884$ knots with finite parabolic character scheme, the ten exceptions all being large. The exceptions are
	\[
	\begin{array}{*{5}{l}}
		12a_{396} & 12a_{634} & 12a_{1124} & 12a_{1167} & 12n_{494} \\
		12n_{495} & 12n_{496} & 12n_{600} & 12n_{601} & 12n_{602}
	\end{array}
	\]
	and have finite and reduced parabolic character schemes. It is worth noting that the discrepancy is $\pm2$:
	\begin{equation*}
	n_- - n_+ = \frac{\det(K)-1}{2}\pm 2\,,
	\end{equation*}
	with the minus sign for $12a_{1124}$ and the plus sign for the other nine knots. 
    \begin{remark}
    According to \cite{knotinfo}, the Alexander module of each of these ten knots is non-cyclic over $\mathbb{Z}[t^{\pm1}]$ (with Nakanishi index 2) but becomes cyclic over $\mathbb{Q}[t^{\pm1}]$. By contrast, all $29$ knots in our list whose Alexander modules remain non-cyclic over $\mathbb{Q}[t^{\pm1}]$ satisfy the conjecture.     
    \end{remark}
    
	Among the 2884 knots with finite parabolic character scheme, exactly $43$ have a parabolic character of multiplicity greater than one. In each case, the relevant elimination ideal contains an irreducible univariate factor of degree one or two, occurring with multiplicity two or three. The possible types are as follows.
    \begin{itemize}
        \item A linear component of multiplicity three with positive obstruction ($24$ knots):
	{\small
		\[
		\begin{array}{*{8}{l}}
			11a_{100} & 11a_{108} & 11a_{109} & 11a_{134} & 11a_{194} & 11a_{223} & 11n_{85} & 11n_{87} \\
			11n_{104} & 11n_{105} & 11n_{106} & 11n_{107} & 12a_{280} & 12a_{281} & 12a_{374} & 12a_{445} \\
			12a_{702} & 12a_{706} & 12a_{708} & 12a_{737} & 12a_{739} & 12a_{741} & 12a_{771} & 12a_{798}
		\end{array}
		\]
	}%
    \item A quadratic component of multiplicity three with positive obstruction ($16$ knots):
	{\small
		\[
		\begin{array}{*{8}{l}}
			12a_{217} & 12a_{220} & 12a_{225} & 12a_{250} & 12a_{263} & 12a_{357} & 12a_{382} & 12a_{383} \\
			12n_{142} & 12n_{143} & 12n_{271} & 12n_{272} & 12n_{295} & 12n_{296} & 12n_{300} & 12n_{301}
		\end{array}
		\]
	}%
    \item A quadratic component of multiplicity two with positive obstruction ($2$ knots): $12a_{869}$ and $12a_{1152}$.
    \item A quadratic component of multiplicity two with negative obstruction ($1$ knot): $12n_{881}$
    \end{itemize}

	\subsection*{Acknowledgments}
	The authors are grateful to Dong-il Lee, Dong Uk Lee, Philip Choi and Renaud Detcherry for helpful discussions. 
	The knot diagrams were drawn with the \texttt{draw programme} developed by Andrew Bartholomew (\url{http://www.layer8.co.uk/maths/draw/index.htm}), modified in part for our purposes.
	S.Y. was supported by the National Research Foundation of Korea (NRF) grant funded by the Ministry of Education (RS-2024-00442775).
	S.K. was supported by Basic Science Research Program through the National Research Foundation of Korea (NRF) funded by the Ministry of Education (RS-2025-25434700).

	\bibliographystyle{amsalpha}
	\bibliography{bibliog}
\end{document}